\DeclareMathOperator{\Sym}{Sym}
\DeclareMathOperator{\Cay}{Cay}
\DeclareMathOperator{\Aut}{Aut}
\DeclareMathOperator{\Z}{Z}
\DeclareMathOperator{\N}{N}
\newtheorem{theorem}{Theorem}[section]
\newtheorem{lemma}[theorem]{Lemma}
\newtheorem{cor}[theorem]{Corollary}
\theoremstyle{definition}
\newtheorem{definition}[theorem]{Definition}
\newtheorem{notation}[theorem]{Notation}
\newtheorem*{ack}{Acknowledgments}
\theoremstyle{remark}
\newtheorem{remark}[theorem]{Remark}
 \newcounter{case}
 \newenvironment{case}[1][\unskip]{\refstepcounter{case}\normalfont\em
 \medbreak \noindent Case \thecase\ #1.\ }{\unskip\upshape}
 \renewcommand{\thecase}{\arabic{case}}
\newcommand{\conj}[2]{#2^{#1}}
\newcommand{\cartprod}{\mathbin{\scriptstyle\square}}
\newcommand{\ZZ}{\mathbb{Z}}
\newcommand{\pref}[1]{(\ref{#1})}
\begin{document}

\title[Detecting graphical regular representations]{Groups for which it is easy to detect graphical regular representations}

\author{Dave Witte Morris} 
\address{Department of Mathematics and Computer Science\\
University of Lethbridge\\
Lethbridge, AB. T1K 3M4}
\email{dave.morris@uleth.ca}

\author{Joy Morris} 
\address{Department of Mathematics and Computer Science\\
University of Lethbridge\\
Lethbridge, AB. T1K 3M4}
\email{joy.morris@uleth.ca}

\author{Gabriel Verret}
\address{Department of Mathematics, The University of Auckland, Private Bag 92019, 
Auckland 1142, New Zealand}
\email{g.verret@auckland.ac.nz}

\keywords{Cayley graph, GRR, DRR, automorphism group, normalizer}


\begin{abstract}
We say that a finite group~$G$ is \emph{DRR-detecting} if, for every subset~$S$ of~$G$, either the Cayley digraph $\Cay(G,S)$ is a digraphical regular representation (that is, its automorphism group acts regularly on its vertex set) or there is a nontrivial group automorphism~$\varphi$ of $G$ such that $\varphi(S) = S$. We show that every nilpotent DRR-detecting group is a $p$-group, but that the wreath product $\ZZ_p \wr \ZZ_p$ is not DRR-detecting, for every odd prime~$p$. We also show that if $G_1$ and $G_2$ are nontrivial groups that admit a digraphical regular representation and either $\gcd\bigl( |G_1|, |G_2| \bigr) = 1$, or $G_2$ is not DRR-detecting, then the direct product $G_1 \times G_2$ is not DRR-detecting.
Some of these results also have analogues for graphical regular representations.
\end{abstract}

\maketitle

\section{Introduction}

All groups and graphs in this paper are finite. Recall \cite{Babai} that a digraph~$\Gamma$ is said to be a \emph{digraphical regular representation (DRR)} of a group~$G$ if the automorphism group of~$\Gamma$ is isomorphic to~$G$ and acts regularly on the vertex set of $\Gamma$. If a DRR of~$G$ happens to be a graph, then it is also called a \emph{graphical regular representation (GRR)} of~$G$. Other terminology and notation can be found in \cref{PrelimSect}.

It is well known that if $\Gamma$ is a GRR (or DRR) of~$G$, then $\Gamma$ must be a Cayley graph (or Cayley digraph, respectively), so there is a subset~$S$ of~$G$ such that $\Gamma \cong \Cay(G,S)$ (and $S$ is inverse-closed if $\Gamma$ is a graph). It is traditional \cite[p.~243]{Godsil} to let
	\[ \Aut(G,S) = \{\, \varphi \in \Aut(G) \mid \varphi(S) = S \,\} . \] 
Since $\Aut(G,S) \subseteq \Aut \bigl( \Cay(G,S) \bigr)$, it is obvious (and well known) that if $\Aut(G,S)$ is nontrivial, then $\Cay(G,S)$ is not a GRR (or DRR). In this paper, we discuss groups for which the converse holds:

\begin{definition}
We say that a group $G$ is \emph{GRR-detecting} if, for every inverse-closed subset $S$ of $G$, $\Aut(G,S) = \{1\}$ implies that $\Cay(G,S)$ is a GRR. Similarly, a group $G$ is \emph{DRR-detecting} if for every subset $S$ of $G$, $\Aut(G,S) = \{1\}$ implies that $\Cay(G,S)$ is a DRR.
\end{definition}

\begin{remark}
Every Cayley graph is a Cayley digraph, so every DRR-detecting group is GRR-detecting.
\end{remark}

\begin{definition}
We say that a Cayley (di)graph $\Gamma=\Cay(G,S)$ on a group $G$ \emph{witnesses that $G$ is not GRR-detecting} (respectively, not DRR-detecting) if $\Aut(G,S) = \{1\}$ but $\Gamma$ is not a GRR (respectively, not a DRR) for $G$.
\end{definition}

An important class of DRR-detecting groups was found by Godsil. His result actually deals with vertex-transitive digraphs, rather than only the more restrictive class of Cayley graphs, but here is a special case of his result in our terminology:

\begin{theorem}[Godsil, cf.\ {\cite[Corollary 3.9]{Godsil}}] \label{Godsil}
Let $G$ be a $p$-group and let $\ZZ_p$ be the cyclic group of order~$p$. If $G$ admits no homomorphism onto the wreath product $\ZZ_p \wr \ZZ_p$ then $G$ is DRR-detecting (and therefore also GRR-detecting). 
\end{theorem}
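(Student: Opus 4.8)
The plan is to take an arbitrary subset $S$ of $G$ with $\Aut(G,S) = \{1\}$, set $\Gamma = \Cay(G,S)$ and $A = \Aut(\Gamma)$, identify $V(\Gamma)$ with $G$ so that the right-regular representation $G_R$ is a regular subgroup of $A$, and prove $A = G_R$; then $\Gamma$ is a DRR, and the GRR statement follows at once from the Remark. The starting point is the standard normalizer identity $\N_A(G_R) = G_R \rtimes \Aut(G,S)$, whose stabilizer-of-the-identity part is exactly the set of group automorphisms fixing $S$. Thus the hypothesis $\Aut(G,S) = \{1\}$ says precisely that $G_R$ is \emph{self-normalizing} in $A$. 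Since $G$, and hence $G_R$, is a $p$-group, and a proper subgroup of a finite $p$-group never equals its own normalizer, a self-normalizing $p$-subgroup must be Sylow: any Sylow $p$-subgroup $P \ge G_R$ would otherwise satisfy $G_R < \N_P(G_R) \le \N_A(G_R) = G_R$. Hence $G_R \in \mathrm{Syl}_p(A)$, the point stabilizer $A_v$ is a $p'$-group, and $A = G_R A_v$ with $G_R \cap A_v = \{1\}$; moreover $A$ is $2$-closed, being the automorphism group of a digraph.

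Everything therefore reduces to the following purely group-theoretic statement, which I would attack by induction on $|G|$: if $A$ is a $2$-closed transitive group of degree $p^n$ whose regular Sylow $p$-subgroup $G_R$ is self-normalizing and $A \ne G_R$, then $G_R$ has a quotient isomorphic to $\ZZ_p \wr \ZZ_p$. Here $A \ne G_R$ is the same as $A_v \ne \{1\}$. To run the induction I would choose a minimal $A$-invariant block system $\mathcal{B}$; because $G_R$ is regular this is the coset partition of a subgroup $H$ of $G$, and $A$ acts on $\mathcal{B}$ with kernel $K$, the regular $p$-group $G_R$ inducing regular $p$-groups both on $\mathcal{B}$ and inside a single block.

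The primitive case (where $\mathcal{B}$ is trivial) should be eliminated rather than produce the wreath quotient. If $A$ is of affine type, its elementary abelian regular socle is a normal $p$-subgroup contained in the Sylow subgroup $G_R$ and of the same order, hence equal to $G_R$; then $G_R \trianglelefteq A$ forces $\N_A(G_R) = A = G_R$, contradicting $A_v \ne \{1\}$. If $A$ is $2$-transitive, its $2$-closure is the full symmetric group, in which a regular subgroup has the holomorph (strictly larger than itself) as normalizer, so $G_R$ cannot be self-normalizing; and one checks that the remaining primitive types are likewise incompatible with harbouring a self-normalizing regular Sylow $p$-subgroup inside a $2$-closed group of prime-power degree. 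So one is reduced to the imprimitive case, and to assembling, from the quotient action on $\mathcal{B}$ and the action within a block, a homomorphism of $G$ onto $\ZZ_p \wr \ZZ_p$.

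This assembly is the main obstacle. The mechanism one expects is that a nontrivial $p'$-element of $A_v$ cannot act trivially on the block structure once $G_R$ is self-normalizing, so it must act on a chief layer $\ZZ_p^{\,p}$ of $G_R$ in a manner cyclically interlocked with a permutation of $p$ blocks, and this interlocking is exactly what the top group of $\ZZ_p \wr \ZZ_p$ records. The two delicate points are (i) arranging that the wreath structure, which first appears merely as a \emph{section} of $A$, descends to a genuine \emph{quotient} of $G_R \cong G$; and (ii) controlling the inductive step so that the self-normalizing hypothesis (equivalently $\Aut(G,S) = \{1\}$) survives passage both to the induced group on $\mathcal{B}$ and to the action on a block, including ruling out the degenerate possibility that the extra $p'$-automorphism is confined to a proper block, where a smaller instance of the theorem would apply.
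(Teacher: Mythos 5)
First, a point of reference: the paper does not prove \cref{Godsil} at all --- it is quoted from Godsil's 1981 paper \cite{Godsil} as a known result --- so your attempt has to stand on its own, and as written it does not. Your reduction is correct and standard: $\Aut(G,S)=\{1\}$ is equivalent to $\widehat G$ being self-normalizing in $A=\Aut\bigl(\Cay(G,S)\bigr)$ by \cref{Norm(regrep)}, a self-normalizing $p$-subgroup must be Sylow, hence the point stabilizer $A_v$ is a $p'$-group and $A$ is $2$-closed. But the entire content of the theorem is the implication you then state as a ``purely group-theoretic statement'' and explicitly decline to prove: that a nontrivial $p'$-element in $A_v$, inside a $2$-closed group whose regular Sylow $p$-subgroup is self-normalizing, forces $G$ to have a \emph{quotient} isomorphic to $\ZZ_p \wr \ZZ_p$. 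You flag this yourself (``This assembly is the main obstacle'') and leave both of your ``delicate points'' unresolved; in particular you never exhibit a normal subgroup of $G$ with quotient $\ZZ_p \wr \ZZ_p$ --- the wreath structure is only ever located as a section of $A$, and converting a section of the automorphism group into a genuine quotient of $G$ is exactly the work that Godsil's analysis of $2$-closures of $p$-groups performs. A proof outline whose central step is announced as an obstacle is not a proof.

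There are also problems in the parts you do sketch. The elimination of the primitive case handles only the affine and $2$-transitive types and then asserts that ``one checks'' the rest; for degree $p^n$ the remaining possibilities include product-action groups such as $\Sym(p) \wr \Sym(2)$ acting on $p^2$ points, which is $2$-closed (it is $\Aut(K_p \cartprod K_p)$) and contains a regular Sylow $p$-subgroup $(\ZZ_p)^2$ --- it is excluded only because that subgroup fails to be self-normalizing, which must actually be verified, and invoking a classification of primitive groups of prime-power degree imports far heavier machinery than the 1981 result uses. Finally, the inductive setup is not quite right: if $\mathcal{B}$ is the coset partition of a non-normal subgroup $H \le G$, then $G_R$ does \emph{not} induce a regular group on $\mathcal{B}$ (the induced action is $G$ on the cosets of $H$, with point stabilizers conjugate to $H$), so the claim that both the top action and the block action are regular $p$-groups fails for a minimal block system, and the induction as described does not typecheck.
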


Since $\ZZ_p \wr \ZZ_p$ is nonabelian, the following statement is an immediate consequence:

\begin{cor} \label{abelian-p}
Every abelian $p$-group is DRR-detecting (and therefore also GRR-detecting). 
\end{cor}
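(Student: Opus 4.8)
The plan is to derive this directly from Godsil's theorem (\cref{Godsil}). That result guarantees that a $p$-group $G$ is DRR-detecting as soon as $G$ admits no homomorphism onto $\ZZ_p \wr \ZZ_p$, so the whole task reduces to verifying this single hypothesis when $G$ is abelian. First I would recall that a ``homomorphism onto'' means a surjective homomorphism, so that the hypothesis of \cref{Godsil} fails for $G$ precisely when $\ZZ_p \wr \ZZ_p$ arises as a quotient of $G$.

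The key observation is that every quotient of an abelian group is abelian: if $G$ is abelian and $\psi \colon G \to H$ is surjective, then for all $g, g' \in G$ we have $\psi(g)\psi(g') = \psi(gg') = \psi(g'g) = \psi(g')\psi(g)$, so $H = \psi(G)$ is abelian. On the other hand, $\ZZ_p \wr \ZZ_p$ is nonabelian, as already noted in the sentence preceding the corollary. Hence no abelian $p$-group can admit a homomorphism onto $\ZZ_p \wr \ZZ_p$, since such a map would exhibit the nonabelian group $\ZZ_p \wr \ZZ_p$ as a quotient of an abelian group, a contradiction. Applying \cref{Godsil} then shows that every abelian $p$-group is DRR-detecting.

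Finally, the parenthetical ``and therefore also GRR-detecting'' requires no additional argument: it is exactly the content of the Remark recording that every DRR-detecting group is GRR-detecting. I do not anticipate any genuine obstacle here; the only point needing care is the nonabelianness of $\ZZ_p \wr \ZZ_p$, but this is standard and is in any case asserted in the text immediately before \cref{abelian-p}.
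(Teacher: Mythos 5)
Your proof is correct and is exactly the argument the paper intends: the corollary is deduced from \cref{Godsil} via the observation that the nonabelian group $\ZZ_p \wr \ZZ_p$ cannot be a quotient of an abelian group. Nothing further is needed.
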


The following result shows that the bound in Godsil's theorem is sharp, in the sense that $\ZZ_p \wr \ZZ_p$ cannot be replaced with a larger $p$-group (when $p$ is odd):

\begin{theorem} \label{WreathNotDetect}
If $p$ is an odd prime, then the wreath product $\ZZ_p \wr \ZZ_p$ is not GRR-detecting (and is therefore also not DRR-detecting).
\end{theorem}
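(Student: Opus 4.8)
The plan is to exhibit an explicit inverse-closed connection set $S \subseteq G := \ZZ_p \wr \ZZ_p$ whose Cayley graph $\Cay(G,S)$ witnesses that $G$ is not GRR-detecting. This requires two things: first, that $\Aut(G,S) = \{1\}$, so that the ``easy'' test detects nothing; and second, that $\Cay(G,S)$ is nevertheless \emph{not} a GRR, i.e.\ that its full automorphism group is strictly larger than the regular copy of $G$. So the construction must produce a graph automorphism fixing the identity vertex that does \emph{not} come from a group automorphism of $G$.

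First I would fix notation for $G = \ZZ_p \wr \ZZ_p$, writing the base group as $B = (\ZZ_p)^p$ with the top $\ZZ_p$ cyclically permuting the $p$ coordinates; let $t$ denote a generator of the top group and $e_0, \dots, e_{p-1}$ the standard generators of $B$. The natural source of ``extra'' symmetry is the automorphism group of $B \cong (\ZZ_p)^p$, which is the large group $\mathrm{GL}_p(\ZZ_p)$, only a tiny part of which normalizes the $t$-action and hence extends to $\Aut(G)$. The strategy is to choose $S$ so that $\Cay(G,S)$, when restricted appropriately, ``sees'' only the additive structure of $B$ and an involution-type symmetry built from the top action, producing a combinatorial automorphism that permutes elements of $B$ linearly in a way incompatible with commuting with $t$ — hence not in $\Aut(G)$. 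Concretely, I would try to take $S$ to be the union of a carefully chosen single $\langle t\rangle$-orbit (or small number of orbits) together with elements of $B$, and then verify the two required properties by direct computation.

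To show $\Aut(G,S) = \{1\}$, I would analyze $\Aut(G)$ explicitly: since $p$ is an odd prime, the base group is characteristic (as the set of $p$th powers or via the derived/Frattini structure), so every automorphism of $G$ induces an automorphism of $B$ that normalizes the cyclic shift; this pins $\Aut(G)$ down to a manageable group, and I would choose the $B$-part and $\langle t\rangle$-part of $S$ so that the only element of this group fixing $S$ setwise is the identity — e.g.\ by breaking all the cyclic and scaling symmetries with asymmetric choices of coefficients. To show $\Cay(G,S)$ is not a GRR, I would exhibit an explicit permutation $\sigma$ of the vertex set $G$ fixing the identity, preserving adjacency, but acting on $B$ by a linear map that does not commute with the shift $t$; the oddness of $p$ is what makes room for such a non-commuting linear symmetry that still preserves the particular connection set (for $p=2$ the analogous construction fails, matching the hypothesis). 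Verifying that $\sigma$ preserves adjacency reduces to checking $\sigma(S) = S$ under the combinatorial adjacency rule together with consistency across the cosets of $B$.

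The main obstacle will be the simultaneous balancing act in the design of $S$: I need $S$ rigid enough that \emph{group} automorphisms cannot fix it (forcing $\Aut(G,S)=\{1\}$), yet symmetric enough that a \emph{non-group} graph automorphism $\sigma$ does fix it. These pull in opposite directions, so the heart of the proof is finding the right connection set and then carefully checking both conditions — the non-GRR direction being the delicate one, since I must confirm that $\sigma$ respects adjacency not just within $B$ but across all the $t$-translates, and that $\sigma \notin G \cdot \Aut(G,S)$ so that it genuinely enlarges the automorphism group. I expect the odd-prime hypothesis to enter precisely at the point where I construct $\sigma$, exploiting that a suitable linear involution or permutation of the $p$ coordinates exists only when $p$ is odd, and I would keep the $p=2$ failure in mind as a sanity check on the construction.
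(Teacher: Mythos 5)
Your proposal correctly identifies the overall shape of the argument --- build an explicit inverse-closed $S$ with $\Aut(G,S)=\{1\}$, then exhibit an extra automorphism fixing the identity that acts on the base group $B=(\ZZ_p)^p$ by a linear map not commuting with the shift --- and this matches the paper's strategy in spirit. It even correctly locates where oddness of $p$ enters: the paper's extra automorphisms act on $B$ as diagonal matrices with entries in $\{\pm1\}\subseteq\ZZ_p^\times$, which are nontrivial precisely because $p$ is odd. But the proposal contains no actual construction and no verification: no connection set is written down, no automorphism $\sigma$ is exhibited, and both of the claims you would need ($\Aut(G,S)=\{1\}$, and that $\sigma$ preserves adjacency) are deferred. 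By your own description, ``the heart of the proof is finding the right connection set and then carefully checking both conditions'' --- and that heart is entirely absent. This is a genuine gap, not a stylistic one: the triviality of $\Aut(G,S)$ for the paper's choice of $S$ occupies roughly two pages of delicate computation, and it is not at all automatic that a set rigid enough to kill $\Aut(G,S)$ still admits a non-group automorphism.

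The specific difficulty you flag but do not resolve --- checking that $\sigma$ respects adjacency ``across all the $t$-translates'' --- is exactly where the paper introduces its key device, which your plan lacks. Rather than defining $\sigma$ ad hoc and verifying edges by hand, the paper embeds $\ZZ_p\wr\ZZ_p\cong A\ltimes V$ as the Sylow $p$-subgroup of a larger group $G=(A\ltimes Q)\ltimes V$, where $Q\cong(\ZZ_2)^n$ acts on $V$ through an induced representation, identifies the vertex set with the coset space $Q\backslash G$, and chooses $S$ so that $SQ=QS$. This single condition makes the entire group $G$ (in particular all of $Q$) act by graph automorphisms, with no edge-by-edge checking; the desired $\sigma$ is then right multiplication by a nontrivial element of $Q$, which acts on $V$ by a non-scalar $\pm1$ diagonal matrix. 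Without this (or an equivalent mechanism), your ``balancing act'' is not merely an obstacle to be noted but the entire content of the theorem, so the proposal as written does not constitute a proof.
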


\begin{remark} \label{Z2wrZ2NotDetect}
The conclusion of \cref{WreathNotDetect} is not true for $p = 2$, because $\ZZ_2 \wr \ZZ_2$ is GRR-detecting. This is a special case of the fact that if a group has no GRR, then it is GRR-detecting \cite[Theorem~1.4]{Godsil-NonSolvable}.
\end{remark}

The following two results provide additional examples, by showing that direct products often yield groups that are not DRR-detecting:

\begin{theorem} \label{G1xG2relprime}
If $G_1$ and $G_2$ are nontrivial groups that admit a DRR (a GRR, respectively) and $\gcd \bigl( |G_1|, |G_2| \bigr) = 1$, then $G_1 \times G_2$ is not DRR-detecting (not GRR-detecting, respectively).
\end{theorem}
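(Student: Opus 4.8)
The plan is to construct an explicit connection set on $G = G_1 \times G_2$ that witnesses $G$ is not DRR-detecting. Since $G_1$ admits a DRR, I fix $S_1 \subseteq G_1$ with $\Cay(G_1, S_1)$ a DRR, and similarly fix $S_2 \subseteq G_2$ with $\Cay(G_2, S_2)$ a DRR; for the GRR statement I instead take $S_1, S_2$ inverse-closed so that both Cayley graphs are GRRs. I then set
\[ S = (S_1 \times G_2) \cup (\{1\} \times S_2), \]
so that $\Cay(G, S)$ is the lexicographic (wreath) product $\Cay(G_1, S_1)[\Cay(G_2, S_2)]$: there is an arc from $(u_1, u_2)$ to $(v_1, v_2)$ exactly when $u_1 \to v_1$ in $\Cay(G_1, S_1)$, or else $u_1 = v_1$ and $u_2 \to v_2$ in $\Cay(G_2, S_2)$. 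In the GRR case I would first observe that $S$ is inverse-closed, which is immediate from $S_1 = S_1^{-1}$ and $S_2 = S_2^{-1}$.

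Next I would show $\Cay(G, S)$ is not a DRR by exhibiting a nontrivial automorphism fixing the identity vertex. Choose any $v_1^* \in G_1 \setminus \{1\}$ (which exists as $G_1$ is nontrivial) and any $g \in G_2 \setminus \{1\}$, and let $\beta$ send $(v_1^*, x) \mapsto (v_1^*, gx)$ for all $x \in G_2$ while fixing every vertex whose first coordinate differs from $v_1^*$. This $\beta$ permutes the single fiber $\{v_1^*\} \times G_2$ by a left translation of $\Cay(G_2, S_2)$ and leaves all other fibers untouched; such fiberwise maps preserve adjacency in a lexicographic product (adjacency across distinct fibers depends only on the unchanged first coordinates, and within the modified fiber the translation is an automorphism of $\Cay(G_2,S_2)$). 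Since $\beta$ fixes $(1,1)$ but moves $(v_1^*, 1)$, the stabilizer of a vertex in $\Aut(\Cay(G,S))$ is nontrivial, so this group does not act regularly and $\Cay(G,S)$ is not a DRR. The same map is an automorphism in the undirected case, so it shows $\Cay(G,S)$ is not a GRR either.

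It then remains to verify $\Aut(G, S) = \{1\}$, and this is where the coprimality hypothesis enters. Because $\gcd(|G_1|, |G_2|) = 1$, the subgroups $G_1 \times \{1\}$ and $\{1\} \times G_2$ are characteristic (each is precisely the set of elements whose order divides the corresponding factor order), so every $\varphi \in \Aut(G)$ splits as $\varphi = \varphi_1 \times \varphi_2$ with $\varphi_i \in \Aut(G_i)$. Since $1 \notin S_1$, the elements of $S$ with nontrivial first coordinate are exactly those of $S_1 \times G_2$, and those with trivial first coordinate are exactly those of $\{1\} \times S_2$; as $\varphi_1$ fixes $1$, the map $\varphi$ respects this splitting. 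Hence $\varphi(S) = S$ forces $\varphi_1(S_1) = S_1$ and $\varphi_2(S_2) = S_2$, i.e.\ $\varphi_i \in \Aut(G_i, S_i)$. But $\Cay(G_i, S_i)$ is a DRR (GRR), so $\Aut(G_i, S_i)$ embeds in the stabilizer of a vertex under a regular action and is therefore trivial; thus $\varphi = 1$.

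I expect the step needing the most care to be the verification that the fiberwise maps really are automorphisms and that a nontrivial vertex-stabilizer already rules out regularity — this is where the wreath-product structure does the work. The reduction $\Aut(G) = \Aut(G_1) \times \Aut(G_2)$ is standard, but it is exactly the point that fails without coprimality, which explains why that hypothesis is assumed.
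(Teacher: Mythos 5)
Your proof is correct, and while it builds exactly the same witness digraph as the paper -- the lexicographic (wreath) product $\Cay(G_1,S_1)\wr\Cay(G_2,S_2)$, realized as $\Cay\bigl(G_1\times G_2,\,(S_1\times G_2)\cup(\{1\}\times S_2)\bigr)$ -- your verification is genuinely different and noticeably more elementary. The paper deduces the theorem from a more general statement (\cref{DRR-normal}, for an arbitrary normal subgroup $N\trianglelefteq G$ with $\Z(N)\le\Z(G)$ and $|\Z(N)|$ coprime to the order of the abelianization of $G/N$), whose proof requires computing the \emph{full} automorphism group of the wreath product via the Dobson--Morris theorem (\cref{thm:wreath-aut}) and then characterizing when the regular representation is self-normalizing in $W(G,N)$ (\cref{SelfNormalizingInWreath}). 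You sidestep all of that: to show the digraph is not a DRR you only need \emph{one} extra automorphism (your fiberwise translation $\beta$, which is an element of $W(G,N)\smallsetminus\widehat{G}$ in the paper's notation), and to show $\Aut(G,S)=\{1\}$ you argue directly inside $\Aut(G)$, using coprimality to get $\Aut(G_1\times G_2)=\Aut(G_1)\times\Aut(G_2)$ via characteristic subgroups, and then triviality of each $\Aut(G_i,S_i)$ because $\Cay(G_i,S_i)$ is a DRR. This is self-contained and avoids the citation of the wreath-product automorphism theorem, at the cost of not yielding the paper's more general \cref{DRR-normal}. Two trivial adjustments: with the paper's convention that arcs go from $g$ to $sg$, the automorphisms of a Cayley digraph coming from the regular representation are \emph{right} translations, so $\beta$ should send $(v_1^*,x)$ to $(v_1^*,xg)$ rather than $(v_1^*,gx)$; and the assertion $1\notin S_1$ should be flagged as a harmless normalization (a loop at every vertex does not change the automorphism group, so one may delete $1$ from the connection set of any DRR).
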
 

\begin{theorem} \label{G1xG2notDRR}
If $G_1$ admits a DRR (a GRR, respectively) and $G_2$ is not DRR-detecting (not GRR-detecting, respectively), then $G_1 \times G_2$ is not DRR-detecting (not GRR-detecting, respectively).
\end{theorem}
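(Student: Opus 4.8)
The plan is to build, from a DRR of $G_1$ and a witness for $G_2$, an explicit Cayley digraph on $G_1 \times G_2$ realized as a Cartesian product whose extra symmetry is inherited from $G_2$ while its connection set admits no nontrivial group automorphism. Fix a DRR $\Gamma_1 = \Cay(G_1,S_1)$ of $G_1$; a DRR is connected, so $S_1$ generates $G_1$, and since $\Aut(\Gamma_1)$ is regular we have $\Aut(G_1,S_1) = \{1\}$. Fix a subset $S_2$ of $G_2$ witnessing that $G_2$ is not DRR-detecting, so $\Aut(G_2,S_2) = \{1\}$ while $\Gamma_2 := \Cay(G_2,S_2)$ is not a DRR. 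Set
\[
 S = (S_1 \times \{1\}) \cup (\{1\} \times S_2) \subseteq G_1 \times G_2 ,
\]
so that $\Cay(G_1 \times G_2, S) = \Gamma_1 \cartprod \Gamma_2$. The GRR case runs in parallel, taking $S_1, S_2$ inverse-closed.

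First I would show $\Cay(G_1 \times G_2, S)$ is not a DRR. Since $\Gamma_2$ is not a DRR, its automorphism group strictly contains the regular representation of $G_2$, so the stabilizer of the vertex $1$ is nontrivial: choose $\sigma \in \Aut(\Gamma_2)$ with $\sigma(1) = 1$ and $\sigma \neq 1$. A direct check on the two edge-types of the Cartesian product shows that $(g_1,g_2) \mapsto (g_1, \sigma(g_2))$ is an automorphism of $\Gamma_1 \cartprod \Gamma_2$; it fixes $(1,1)$ and is nontrivial, so $\Aut(\Cay(G_1 \times G_2, S))$ is not regular. The same map works verbatim in the GRR case (where $\sigma$ exists because $\Gamma_2$ is not a GRR).

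The crux is to prove $\Aut(G_1 \times G_2, S) = \{1\}$. Let $\varphi(S) = S$ and write its components as homomorphisms $\alpha \colon G_1 \to G_1$, $\gamma \colon G_1 \to G_2$, $\beta \colon G_2 \to G_1$, $\delta \colon G_2 \to G_2$ (subject to the usual commuting conditions). The strategy is to show $\varphi$ permutes the pair of axis subgroups $N_1 = G_1 \times \{1\}$ and $N_2 = \{1\} \times G_2$. Granting this, if $\varphi$ fixes each of $N_1, N_2$ then $\gamma$ and $\beta$ vanish and $\varphi(S \cap N_i) = S \cap N_i$ forces $\alpha(S_1) = S_1$ and $\delta(S_2) = S_2$, so $\alpha \in \Aut(G_1,S_1) = \{1\}$ and $\delta \in \Aut(G_2,S_2) = \{1\}$, giving $\varphi = 1$; whereas $\varphi(N_1) = N_2$ would yield an isomorphism $\psi \colon G_1 \to G_2$ with $\psi(S_1) = S_2$, i.e.\ $\Gamma_1 \cong \Gamma_2$, impossible since $\Gamma_1$ is a DRR and $\Gamma_2$ is not. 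Two obstructions to this pair-stabilization must be removed: the \emph{shears} $(g_1,g_2) \mapsto (g_1, \gamma(g_1) g_2)$ with $\gamma(G_1) \subseteq \Z(G_2)$, which are killed because $\varphi(S) = S$ forces $\gamma(S_1) = \{1\}$ and $S_1$ generates $G_1$; and, more seriously, \emph{partial mixings}, in which $\varphi$ carries only part of $S_1 \times \{1\}$ into $\{1\} \times S_2$.

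Ruling out partial mixing is the main obstacle. I would handle it by regarding $\varphi$ as a graph automorphism of the connected Cartesian product $\Gamma_1 \cartprod \Gamma_2$ fixing the vertex $(1,1)$, and invoking the unique prime factorization of a connected digraph with respect to the Cartesian product, together with the resulting description of its automorphism group (generated by automorphisms of the prime factors and transpositions of isomorphic factors). This forces $\varphi$ to permute the Cartesian layers, and fixing $(1,1)$ then pins $N_1$ and $N_2$ down up to a transposition of two isomorphic prime factors, one occurring in $\Gamma_1$ and one in $\Gamma_2$. The rigidity of the DRR $\Gamma_1$ (its automorphism group is regular, so its prime factors are themselves rigid and pairwise distinguishable) together with the DRR-versus-non-DRR asymmetry is what I expect to eliminate these transpositions and complete the pair-stabilization; alternatively, a direct analysis of $\alpha, \beta, \gamma, \delta$, using $\ker \alpha \cap \ker \gamma = \{1\}$ and the generation of $G_1$ by $S_1$, may be pushed through. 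Either way, this is where the real work lies.
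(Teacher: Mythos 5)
Your construction is the same as the paper's --- the Cartesian product $\Gamma_1 \cartprod \Gamma_2$ of a DRR for $G_1$ with a witness for $G_2$ --- and your argument that the product is not a DRR is fine. But the crux, showing $\Aut(G_1\times G_2,S)=\{1\}$, is not actually proved: you correctly isolate ``partial mixing'' as the obstacle and then defer it (``this is where the real work lies''), and the mechanism you gesture at does not close the gap. The general automorphism theorem for Cartesian products of weakly connected digraphs allows transpositions of \emph{isomorphic prime factors}, and nothing you say rules out a prime factor of $\Gamma_1$ being isomorphic to a prime factor of $\Gamma_2$ (e.g.\ both could contain a directed $3$-cycle as a Cartesian factor). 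The ``DRR-versus-non-DRR asymmetry'' distinguishes $\Gamma_1$ from $\Gamma_2$ as whole digraphs, not their individual prime factors, so it cannot eliminate such cross-transpositions; and the rigidity of $\Gamma_1$'s factors says nothing about $\Gamma_2$'s. (A smaller slip: a DRR need not be weakly connected --- $\overline{K_2}$ is a DRR of $\ZZ_2$ --- so ``$S_1$ generates $G_1$'' is not automatic.)

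The missing idea, which is where the paper puts its effort, is that one may first \emph{choose} $\Gamma_1$ and $\Gamma_2$ to be prime with respect to the Cartesian product: since $\Gamma$ and $\overline{\Gamma}$ have the same automorphism group, one passes to the complement if necessary, and the theorems of Imrich and of Grech--Imrich--Krystek--Wojakowski / Morgan--Morris--Verret guarantee that one of $\Gamma,\overline{\Gamma}$ is prime except for a short list of small graphs that are checked by hand (the paper's \cref{graph-prime}). With both factors prime and non-isomorphic (one is a DRR, the other is not), Walker's theorem gives $\Aut(\Gamma_1\cartprod\Gamma_2)=\Aut(\Gamma_1)\times\Aut(\Gamma_2)$ outright, and the conclusion follows from the self-normalizing criterion of \cref{WitnessIffSelfnorm} with no analysis of group automorphisms of $G_1\times G_2$ at all. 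Without this primality reduction your approach has a genuine hole exactly at the step you flag as unfinished.
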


These two results are the main ingredients in the proof of the following theorem:

\begin{theorem} \label{NilpotentGRR}
Every nilpotent DRR-detecting group is a $p$-group.
\end{theorem}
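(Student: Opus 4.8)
The plan is to combine the Sylow decomposition of a nilpotent group with \cref{G1xG2relprime,G1xG2notDRR}, reserving a small collection of base cases for a direct argument. It suffices to prove the contrapositive: every nilpotent group that is not a $p$-group fails to be DRR-detecting. Recall that a finite nilpotent group is the direct product of its Sylow subgroups, so such a $G$ can be written $G = P_1 \times \cdots \times P_k$ with $k \geq 2$, where each $P_i$ is a nontrivial $p_i$-group and the $p_i$ are distinct primes. I will also invoke Babai's classification, namely that the only finite groups admitting no DRR are $\ZZ_2^2$, $\ZZ_2^3$, $\ZZ_2^4$, $\ZZ_3^2$, and $Q_8$. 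Two features of this list are decisive: every one of these groups is a $p$-group (so every group that is not a $p$-group admits a DRR), and every one of them is itself DRR-detecting — the abelian four by \cref{abelian-p}, and $Q_8$ by \cref{Godsil}, since any surjection from $Q_8$ onto $\ZZ_2 \wr \ZZ_2$ would be an isomorphism between two groups of order~$8$, while $Q_8 \not\cong \ZZ_2 \wr \ZZ_2$. Call a Sylow factor $P_i$ \emph{exceptional} if it is one of these five groups; since there is one Sylow factor per prime, at most two factors are exceptional, one a $2$-group and one a $3$-group.

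I would argue by strong induction on $|G|$. If $k \geq 3$, then since at most two Sylow factors are exceptional, there is a non-exceptional factor $P_i$, which therefore admits a DRR; its Hall complement $H = \prod_{j \neq i} P_j$ still involves $k-1 \geq 2$ primes, so $H$ is a smaller nilpotent group that is not a $p$-group, and by induction $H$ is not DRR-detecting. Applying \cref{G1xG2notDRR} to $G = P_i \times H$ then shows that $G$ is not DRR-detecting. This reduces the entire problem to the case $k = 2$.

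For $k = 2$, write $G = P \times Q$ with $\gcd(|P|,|Q|) = 1$. If neither factor is exceptional, then both admit DRRs and \cref{G1xG2relprime} immediately gives that $G$ is not DRR-detecting. The remaining base cases are those in which at least one factor is exceptional: up to interchanging $P$ and $Q$, either $P \in \{\ZZ_2^2, \ZZ_2^3, \ZZ_2^4, Q_8\}$ and $Q$ is a nontrivial group of odd prime-power order (possibly $Q = \ZZ_3^2$, so that both factors are exceptional), or $P = \ZZ_3^2$ and $Q$ is a nontrivial group of prime-power order coprime to~$3$.

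The main obstacle is exactly these base cases, and it is a genuine one: the exceptional factor admits no DRR, so \cref{G1xG2relprime} cannot be applied, yet it is itself DRR-detecting, so \cref{G1xG2notDRR} cannot be applied either; neither product theorem reaches them. A direct construction is therefore required, and because $Q$ ranges over an infinite family of coprime prime-power groups, the construction must be uniform rather than a finite check. The approach I would take is to exhibit an explicit subset $S$ of $G$ witnessing that $G$ is not DRR-detecting: start from a DRR of the non-exceptional factor (or, when both factors are exceptional, from suitable rigid data on each), and extend it across $G$ so that $\Cay(G,S)$ inherits the extra symmetry that \emph{every} Cayley (di)graph on the exceptional factor is forced to carry (this forced symmetry is precisely why that factor has no DRR), while the choice on the good factor is asymmetric enough to force $\Aut(G,S) = \{1\}$. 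Verifying simultaneously that $\Aut(G,S)$ is trivial and that $\Cay(G,S)$ is not a DRR, uniformly over all admissible complements $Q$, is where the real work lies, and it is the step I expect to be most delicate.
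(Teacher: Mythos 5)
Your reduction to the two-prime case is sound (and your observation that at most two Sylow factors can be exceptional is a nice way to organize the $k\ge 3$ step), but the proof stops exactly where the real content begins. You correctly identify that when one Sylow factor $P$ lies in Babai's list, neither \cref{G1xG2relprime} nor \cref{G1xG2notDRR} applies to the Sylow decomposition $G = P\times Q$, and you then defer the entire family of base cases to an unspecified ``uniform direct construction'' that you yourself flag as the delicate step. That is a genuine gap, not a detail: without those cases the theorem is unproved. Moreover, you have overestimated how much explicit construction is actually needed. The key idea you are missing is that you are not obliged to decompose $G$ along its Sylow subgroups. If $P\cong(\ZZ_p)^r$ with $r\ge 2$ (which covers four of the five exceptional groups), write $G=\ZZ_p\times\bigl((\ZZ_p)^{r-1}\times Q\bigr)$: the factor $\ZZ_p$ \emph{does} admit a DRR ($K_2$ for $p=2$, the directed $p$-cycle for $p\ge 3$), and $(\ZZ_p)^{r-1}\times Q$ is a strictly smaller nilpotent group that is still not a $p$-group, hence is not DRR-detecting by induction on $|G|$; now \cref{G1xG2notDRR} applies after all. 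No new Cayley graph needs to be built for these cases.

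The one case this trick cannot reach is $P\cong Q_8$, since $Q_8$ is directly indecomposable, and there an explicit witness really is required. The paper supplies it (\cref{Q8}): take a DRR $\Cay(H,S_1)$ on the odd-order complement $H$ (which exists by \cref{noDRR} unless $H\cong\ZZ_3\times\ZZ_3$, in which case one first swaps the roles of the two Sylow factors and falls back on the $(\ZZ_p)^r$ argument), set $S=S_1\cup\{i\}\cup jH$, check directly that $\Aut(G,S)$ is trivial, and exhibit the non-regular automorphism that fixes $\langle H,i\rangle$ pointwise and right-multiplies the coset $j\langle H,i\rangle$ by a fixed nontrivial $h\in H$. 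So the honest accounting is: your skeleton matches the paper's, but the two ideas that make the base cases go through --- the non-Sylow splitting $\ZZ_p\times\bigl((\ZZ_p)^{r-1}\times Q\bigr)$ fed into the induction, and the concrete connection set for $Q_8\times H$ --- are both absent from your proposal, and they are precisely the parts of the proof that cannot be waved at.
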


\begin{remark}
The phrase ``DRR-detecting'' in \cref{NilpotentGRR} cannot be replaced with ``GRR-detecting.'' For example, it is well known that every abelian group is GRR-detecting (unless it is an elementary abelian $2$-group), because the nontrivial group automorphism $x \mapsto x^{-1}$ is an automorphism of $\Cay(G,S)$.
\end{remark}

Here is an outline of the paper. 
A few definitions and basic results are recalled in \cref{PrelimSect}. 
\Cref{WreathNotDetect} is proved in \cref{ZpwrZpSect}. 
A generalization of \cref{G1xG2relprime} is proved in \cref{WreathProdSect}, by using wreath products of digraphs.
In \Cref{CartesionSect}, we recall some fundamental facts about cartesian products of digraphs and use them to prove \cref{G1xG2notDRR}. 
\Cref{NilpotentGRR} is proved in \cref{NilpotentSect}.

\section{Preliminaries} \label{PrelimSect}

\begin{definition}
Recall that if $S$ is a subset of a group~$G$, then the \emph{Cayley digraph of~$G$ (with respect to the connection set~$S$)} is the digraph $\Cay(G,S)$ whose vertex set is~$G$, such that there is a directed edge from $g_1$ to $g_2$ if and only if $g_2 = s g_1$ for some $s \in S$. If $S$ is closed under inverses, then $Cay(G,S)$ is a graph, and is called a \emph{Cayley graph}.
\end{definition}

See \cref{WreathGroupsDefn} for a general definition of the wreath product of two groups. The following special case is less complicated:

\begin{definition}
Let $\ZZ_p \wr \ZZ_p = \ZZ_p \ltimes (\ZZ_p)^p$, where $\ZZ_p$ acts on $(\ZZ_p)^p$ by cyclically permuting the coordinates: for $(v_1, v_2, \ldots, v_p) \in (\ZZ_p)^p$ and $g \in \ZZ_p$, we have
	\[ (v_1, v_2, \ldots, v_p)^g = (v_{g+1}, v_{g+2}, \ldots, v_n, v_1, v_2, \ldots, v_g) .\]
\end{definition}

We will use the following well-known results.

\begin{theorem}[Babai {\cite[Theorem~2.1]{Babai}}] \label{noDRR}
If a finite group does not admit a DRR, then it is isomorphic to
	\[ \text{$Q_8$, $(\ZZ_2)^2$, $(\ZZ_2)^3$, $(\ZZ_2)^4$, or $(\ZZ_3)^2$,} \]
where $Q_8$ is the quaternion group of order~$8$, which means
	\[ Q_8 = \langle\, i,j,k \mid i^2 = j^2 = k^2 = -1, \ ij = k, \ (-1)^2 = 1 \,\rangle . \]
\end{theorem}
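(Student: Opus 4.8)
The final statement is Babai's theorem, so the plan is to prove that every finite group $G$ other than the five listed exceptions admits a DRR. I would first reduce the problem to a rigidity statement: the right-regular representation $G_R$ (the right translations $g \mapsto gh$) is always contained in $\Aut\bigl(\Cay(G,S)\bigr)$ and acts regularly, so $\Cay(G,S)$ is a DRR precisely when $G_R = \Aut\bigl(\Cay(G,S)\bigr)$, equivalently when the stabilizer of the identity vertex in $\Aut\bigl(\Cay(G,S)\bigr)$ is trivial. Thus the whole problem becomes: for all but finitely many $G$, construct a connection set $S$ whose Cayley digraph has trivial vertex-stabilizer.

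The construction I would use starts from the fact that the full Cayley \emph{color} digraph on any generating set $\{s_1,\dots,s_k\}$ of $G$ --- in which the arc from $g$ to $s_i g$ carries color $i$ --- has color-preserving automorphism group exactly $G_R$, acting regularly. The task is therefore to ``decolor'': to replace the colored arcs by a single uncolored arc type while preserving this rigidity. I would do this by encoding the color of each arc into the local uncolored structure, choosing $S$ as a carefully arranged union of translates and powers of the generators so that, around each vertex, the in- and out-degree pattern together with the short-distance structure recovers which generator produced each arc. Any automorphism fixing the identity is then forced to preserve this color decomposition, hence to lie in the color-automorphism group $G_R$, whose identity-stabilizer is trivial; this yields a DRR.

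The two remaining pieces are the threshold and the exceptions. The decoding step needs enough distinct group elements to separate the $k$ colors without accidental coincidences, and this can fail only when $|G|$ is small, which is exactly what produces the finite exceptional list. To finish, I would treat the small groups directly: verify by a finite case analysis that each of $Q_8$, $(\ZZ_2)^2$, $(\ZZ_2)^3$, $(\ZZ_2)^4$, and $(\ZZ_3)^2$ admits no DRR (for these highly symmetric groups, every connection set $S$ yields a Cayley digraph whose identity-stabilizer is nontrivial), and confirm by explicit connection sets that all other small groups below the threshold do admit a DRR.

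The hard part will be the middle step: controlling the \emph{entire} automorphism group of the uncolored digraph rather than only the automorphisms coming from the holomorph of $G$. Killing the holomorph contribution amounts to making $\Aut(G,S)$ trivial and is comparatively routine, but ruling out ``unexpected'' automorphisms outside $G_R \rtimes \Aut(G)$ --- and simultaneously determining the precise size bound below which no suitable $S$ exists --- is where essentially all of the difficulty lies.
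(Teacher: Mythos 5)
This statement is Babai's classification theorem, which the paper quotes from \cite{Babai} without proof; there is no internal argument to compare against, so your proposal must stand on its own as a proof of the cited result. It does not. What you have written is a strategy outline rather than a proof: you correctly identify the standard reduction (a Cayley digraph is a DRR exactly when the stabilizer of the identity vertex in its full automorphism group is trivial), and you correctly identify the Cayley color digraph as the starting point, but the entire technical content of Babai's argument lies in the ``decoloring'' step that you describe only in the vaguest terms (``a carefully arranged union of translates and powers of the generators so that \dots the short-distance structure recovers which generator produced each arc''). You acknowledge this yourself in your final paragraph: ruling out automorphisms outside the holomorph, and determining the exact threshold below which the construction fails, is ``where essentially all of the difficulty lies'' --- and none of that work is done. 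A proof sketch that defers the hard part is not a proof.

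The treatment of the exceptional groups has the same problem. Showing that $Q_8$, $(\ZZ_2)^2$, $(\ZZ_2)^3$, $(\ZZ_2)^4$, and $(\ZZ_3)^2$ admit no DRR requires, for each such group $G$, an argument that \emph{every} subset $S \subseteq G$ satisfies $\Aut(G,S) \neq \{1\}$ (which by \cref{Norm(regrep)} forces a nontrivial stabilizer), or some other obstruction; you assert this ``finite case analysis'' without carrying out any instance of it. Likewise, the claim that all non-exceptional small groups admit explicit DRRs is asserted, not verified. In short, the skeleton of your plan matches the known architecture of Babai's proof, but every load-bearing step --- the construction of $S$ for general $G$, the rigidity verification, the threshold computation, and the exceptional-case analysis --- is missing. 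Since this is a deep external theorem that the paper deliberately imports rather than proves, the appropriate move in this context is to cite \cite{Babai}, not to reprove it; but if you do intend to reprove it, you must supply the construction and the verifications.
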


\begin{lemma} \label{Norm(regrep)}
Let $\widehat{G}$ be the right regular representation of~$G$. Then:
	\begin{enumerate}
	\item $\widehat{G}$ is contained in $\Aut \bigl( \Cay(G,S) \bigr)$ for every subset~$S$ of~$G$.
	\item The normalizer of $\widehat{G}$ in $\Aut \bigl( \Cay(G,S) \bigr)$ is $\Aut(G,S) \ltimes \widehat{G}$. 
	\end{enumerate}
\end{lemma}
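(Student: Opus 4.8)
The plan is to verify both parts directly from the definitions, with one caveat that deserves constant attention: edges of $\Cay(G,S)$ are given by \emph{left} multiplication (there is an edge $g_1 \to g_2$ exactly when $g_2 = sg_1$ for some $s \in S$), whereas the right regular representation $\widehat{G}$ acts by \emph{right} multiplication. It is precisely this left/right split that makes the two actions interact correctly, and keeping the conventions straight is the only real place where one can slip.

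For part (1), I would write $\widehat{g}\colon x \mapsto xg$ and check that it preserves edges: if $g_2 = sg_1$, then $g_2 g = s(g_1 g)$, so $g_1 \to g_2$ being an edge implies $g_1 g \to g_2 g$ is an edge; applying the same to $\widehat{g}^{-1} = \widehat{g^{-1}}$ gives the converse. Hence each $\widehat{g}$ lies in $\Aut\bigl(\Cay(G,S)\bigr)$, which is statement~(1).

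For part (2), I would prove the two inclusions separately. For the inclusion $\Aut(G,S)\ltimes\widehat{G}\subseteq \N_{\Aut(\Cay(G,S))}(\widehat{G})$, I first note that $\Aut(G,S)\subseteq\Aut\bigl(\Cay(G,S)\bigr)$, since a group automorphism $\varphi$ with $\varphi(S)=S$ sends an edge $g_2 = sg_1$ to $\varphi(g_2)=\varphi(s)\varphi(g_1)$ with $\varphi(s)\in S$. The computation $(\varphi\widehat{g}\varphi^{-1})(x)=\varphi\bigl(\varphi^{-1}(x)\,g\bigr)=x\,\varphi(g)=\widehat{\varphi(g)}(x)$ then shows $\varphi\widehat{g}\varphi^{-1}=\widehat{\varphi(g)}$, so $\Aut(G,S)$ normalizes $\widehat{G}$. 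Finally, every element of $\Aut(G,S)$ fixes the vertex $1$, while the only element of $\widehat{G}$ fixing $1$ is the identity, so $\Aut(G,S)\cap\widehat{G}=\{1\}$ and the product is genuinely the semidirect product $\Aut(G,S)\ltimes\widehat{G}$, sitting inside the normalizer.

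For the reverse inclusion, I would take any $\alpha$ in the normalizer and reduce to the case $\alpha(1)=1$: writing $h=\alpha(1)$ and replacing $\alpha$ by $\widehat{h}^{-1}\alpha$, which still normalizes $\widehat{G}$, I may assume $\alpha$ fixes~$1$. Conjugation by $\alpha$ induces a group automorphism $\varphi$ of $G\cong\widehat{G}$ via $\alpha\widehat{g}\alpha^{-1}=\widehat{\varphi(g)}$; evaluating $\alpha\widehat{g}=\widehat{\varphi(g)}\alpha$ at the vertex~$1$ yields $\alpha(g)=\varphi(g)$ for every $g$, so $\alpha=\varphi$ is itself a group automorphism. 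Since $\alpha$ fixes~$1$ and is a digraph automorphism, it permutes the out-neighbours of~$1$, which are exactly~$S$; hence $\varphi(S)=\alpha(S)=S$ and $\varphi\in\Aut(G,S)$. Tracing back through the reduction shows the original $\alpha$ lies in $\Aut(G,S)\,\widehat{G}$, giving the reverse inclusion and completing the proof. No single step is a serious obstacle here; the entire difficulty is bookkeeping, namely ensuring the left/right multiplication conventions remain consistent throughout the conjugation identities.
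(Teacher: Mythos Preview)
Your proof is correct and is precisely the standard argument for this well-known fact. Note, however, that the paper does not actually prove this lemma: it is listed among the ``well-known results'' in the preliminaries and simply stated without proof, so there is no paper argument to compare against. Your write-up would serve perfectly well as the omitted justification.
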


The latter has the following simple consequence:

\begin{lemma} \label{WitnessIffSelfnorm}
If $\Gamma$ is a Cayley digraph on~$G$ (a Cayley graph on~$G$, respectively), then $\Gamma$ witnesses that $G$ is not DRR-detecting (not GRR-detecting, respectively) if and only if the regular representation of~$G$ is a proper self-normalizing subgroup of $\Aut(\Gamma)$.
\end{lemma}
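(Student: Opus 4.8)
The plan is to unwind both sides of the asserted equivalence into statements about the regular representation $\widehat{G}$ inside $A := \Aut(\Gamma)$, where $\Gamma = \Cay(G,S)$, and then to observe that the two resulting conditions are precisely the two clauses of ``proper self-normalizing subgroup.'' By definition, $\Gamma$ witnesses that $G$ is not DRR-detecting exactly when $\Aut(G,S) = \{1\}$ and $\Gamma$ is not a DRR for~$G$; I will match the first of these with ``self-normalizing'' and the second with ``proper.''

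First I would treat the self-normalizing condition. By part~(2) of \cref{Norm(regrep)}, the normalizer of $\widehat{G}$ in~$A$ is $\Aut(G,S) \ltimes \widehat{G}$. This normalizer equals $\widehat{G}$, i.e.\ $\widehat{G}$ is self-normalizing in~$A$, if and only if the complementary factor $\Aut(G,S)$ is trivial. Hence $\Aut(G,S) = \{1\}$ if and only if $\widehat{G}$ is self-normalizing in~$A$.

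Next I would treat the ``proper'' condition, which amounts to showing that $\Gamma$ fails to be a DRR for~$G$ if and only if $\widehat{G} \neq A$. By part~(1) of \cref{Norm(regrep)} we have $\widehat{G} \leq A$, and $\widehat{G}$ already acts regularly on the vertex set. A transitive overgroup of a regular group of the same order must coincide with it, and regularity of~$A$ would force $|A| = |G| = |\widehat{G}|$; thus $A$ acts regularly if and only if $A = \widehat{G}$. Since $\Gamma$ is a DRR for~$G$ exactly when $A$ acts regularly, $\Gamma$ is not a DRR exactly when $\widehat{G}$ is a proper subgroup of~$A$.

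Combining the two equivalences, $\Gamma$ witnesses that $G$ is not DRR-detecting if and only if $\widehat{G}$ is simultaneously proper and self-normalizing in~$A$, which is the claim. The graph case is identical: when $S$ is inverse-closed, $\Gamma$ is a Cayley graph, and every step above applies verbatim with ``DRR'' replaced by ``GRR'' throughout. There is essentially no obstacle here; the only point that deserves a word is the elementary fact that a regular permutation group has no proper regular overgroup, used above.
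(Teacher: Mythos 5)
Your proof is correct and follows the same route the paper intends: the paper states this lemma as an immediate consequence of \cref{Norm(regrep)} without writing out details, and your two equivalences (triviality of $\Aut(G,S)$ versus self-normalizing via part~(2), and failure to be a DRR versus properness via part~(1) and the fact that a regular subgroup admits no proper transitive, hence no regular, overgroup on the same vertex set) are exactly the intended argument.
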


\section{\texorpdfstring{$\ZZ_p \wr \ZZ_p$}{Zp wreath Zp} is not GRR-detecting} \label{ZpwrZpSect}

Let $p$ be an odd prime. In this \lcnamecref{ZpwrZpSect}, we show that $\ZZ_p \wr \ZZ_p$ is not GRR-detecting. (This proves \cref{WreathNotDetect}.) To do this, we will construct a Cayley graph~$\Gamma$ on $\ZZ_p \wr \ZZ_p$ such that $\Gamma$ is not a GRR, but the regular representation of $\ZZ_p \wr \ZZ_p$ is self-normalizing in $\Aut(\Gamma)$.
In order to construct this graph, we first construct a certain group~$G$ that properly contains $\ZZ_p \wr \ZZ_p$. We will then define $\Gamma$ in such a way that $G$ is contained in $\Aut(\Gamma)$.

Let $A \cong \ZZ_p$ be a cyclic group of order~$p$, and choose an irreducible representation of~$A$ on a vector space $Q \cong (\ZZ_2)^n$ over the finite field with $2$~elements, such that $n \geq 2$. Now construct the corresponding semidirect product $A \ltimes Q$, which is a nonabelian group of order $2^n p$. 

Choose a nontrivial 1-dimensional representation $\chi \colon Q \to \{\pm1\} \subseteq \ZZ_p^\times$, and induce it to a representation of $A \ltimes Q$ on a vector space~$V$ over~$\ZZ_p$ \cite[\S3.3, pp.~28--30]{Serre}. Since $Q$ has index~$p$ in $A \ltimes Q$, the vector space~$V$ has dimension~$p$, so $V \cong (\ZZ_p)^p$. 
Let 
	\[ G = (A \ltimes Q) \ltimes V . \]

Since the representation of $A \ltimes Q$ on~$V$ is induced from a one-dimensional representation of the normal subgroup~$Q$, the restriction to~$Q$ decomposes as a direct sum of one-dimensional representations: $V = V_1 \oplus \cdots \oplus V_p$, where each $V_i$ is a subgroup of order~$p$ that is normalized by~$Q$ (cf.\ \cite[Proposition 22, p.~58]{Serre}).
(More precisely, for each~$i\in\{1,\ldots,p\}$, there is some $a \in A$, such that the representation of $Q$ on~$V_i$ is given by~$\chi^a$, where $\chi^a(g) = \chi(\conj{a^{-1}}{g})$ for $g \in Q$.)
Note that, since $A$ normalizes~$Q$, it must (cyclically) permute the $Q$-irreducible summands $V_1,\ldots,V_p$, so the Sylow $p$-subgroup $A \ltimes V$ of~$G$ is isomorphic to $\ZZ_p \wr \ZZ_p$.

Fix $a \in A^\times$. Since $A$ normalizes~$Q$, we know that $Qa$ is fixed by the action of~$Q$ on $Q \backslash G$. Also fix some $v_1 \in V_1^\times$. Then, for each~$i\in\{1,\ldots,p\}$, let $v_i = \conj{a^{i-1}}{v_1} \in V_i^\times$, and define $z = v_1 + v_2 + \cdots + v_p$, so $z$ is a generator of the center $\Z(A \ltimes V)$. 

Now let
	\[ S = \bigl( \langle v_1, v_2 \rangle \smallsetminus \langle v_1 \rangle \bigr) \cup (a \, \conj{Q}{z})^{\pm1} \subseteq A \ltimes V \subseteq G,\]
and let 
	\[ \Gamma = \Cay( A \ltimes V, S) . \]
Since $Q$ normalizes $\langle v_1 \rangle$ and $\langle v_2 \rangle$, and fixes the coset $Qa$ in $Q \backslash G$, it is clear that $S Q = Q S$. Therefore, after identifying the vertex set $A \ltimes V$ of~$\Gamma$ with $Q \backslash Q A V = Q \backslash G$ in the natural way, we have $G \subseteq \Aut(\Gamma)$, via the natural action of~$G$ on $Q \backslash G$. 
(Note that the action of~$G$ on $Q \backslash G$ is faithful, because $Q$ does not contain any nontrivial, normal subgroup of~$G$. Otherwise, since the action of~$A$ on~$Q$ is irreducible, the entire subgroup~$Q$ would have to be normal, which would mean that $Q$ acts trivially on $Q \backslash G$. But this is false, because the representation of~$Q$ on~$V$ is nontrivial.)
So $\Gamma$ is not a GRR.

Therefore, in order to show that $\ZZ_p \wr \ZZ_p \cong A \ltimes V$ is not GRR-detecting, it will suffice to show that $\Aut(A \ltimes V, S)$ is trivial.
To this end, let $\varphi$ be an automorphism of $A \ltimes V$ that fixes~$S$. 
We will show that $\varphi$ is trivial.

Since $V$ is characteristic in $A \ltimes V$ (for example, it is the only abelian subgroup of order~$p^p$), we know that 
	\[ \varphi(V \cap S) = V \cap S = \langle v_1, v_2 \rangle \smallsetminus \langle v_1 \rangle \subseteq \langle v_1, v_2 \rangle .\]
So 
	\[ \varphi \bigl( \langle v_1, v_2 \rangle \bigr)
	= \varphi \bigl( \langle v_1 v_2, v_2 \rangle \bigr)
	= \langle \varphi (v_1 v_2) , \varphi(v_2) \rangle 
	\subseteq \langle \varphi(V \cap S) \rangle
	\subseteq \langle v_1, v_2 \rangle .\]
Since $\varphi$ is injective, we conclude that $\varphi$ fixes $\langle v_1, v_2 \rangle$ (setwise).
Then $\varphi$ also fixes $\langle v_1, v_2 \rangle \smallsetminus S = \langle v_1 \rangle$.

We have $\varphi(a) \notin V$ (because $a \notin V$ and $\varphi$ fixes~$V$), which means $\varphi(a) = a^k v' $ for some $k \in \ZZ_p^\times$ and $v' \in V$. Then (since $v'$ centralizes~$V$, because $V$ is abelian) we have
	\[ \langle v_1, v_2 \rangle
	= \varphi \bigl( \langle v_1, v_2 \rangle \bigr)
	\ni \varphi(v_2)
	= \varphi(\conj{a}{v_1})
	= \conj{\varphi(a)}{\varphi(v_1)}
	\in \conj{a^k}{\langle v_1 \rangle}
	= \langle v_{k + 1} \rangle
	,\] 
so $k \in \{0,1\} \cap \ZZ_p^\times = \{1\}$, which means 
	\[ \varphi(a) = a v' . \]
Note that (since $\varphi(V) = V$) this implies
	\[ \varphi(aV) = aV .\]

Since $\varphi$ fixes~$\langle v_1 \rangle$, we have $\varphi(v_1) = \ell v_1$ for some $\ell \in \ZZ_p^\times$. For every $i \in \{1,\ldots,p\}$, this implies
	\[ \varphi(v_i)
	= \varphi(\conj{a^{i-1}}{v_1})
	= \conj{\varphi(a^{i-1})}{\varphi(v_1)}
	= \conj{a^{i-1}}{(\ell v_1)}
	= \ell v_i
	. \]
Since $\{v_1,\ldots,v_p\}$ generates~$V$, we conclude that 
	\[ \text{$\varphi(v) = \ell v$ for all $v \in V$.} \]

To complete the proof, we will show that $v'$ is trivial and $\ell = 1$. (This means that $\varphi$ fixes~$a$, and also fixes every element of~$V$. So $\varphi$ is the trivial automorphism, as desired.)
For all $z_0 \in \conj{Q}{z}$, we have
	\begin{align*}
	 a \cdot (v' + \ell z_0)
	&= a \, v' \cdot (\ell z_0)
	= \varphi(a) \, \varphi(z_0)
	= \varphi(a \, z_0) 
	\\& \in \varphi(S \cap aV) 
	= \varphi(S) \cap \varphi(aV) 
	= S \cap a V
	= a \, \conj{Q}{z} 
	. \end{align*}
	
Therefore, if we write $v' = \sum_{i=1}^p s_i v_i$ (with $s_i \in \ZZ_p$)
and $z_0 = \sum_{i=1}^p t_i v_i$ (with $t_i \in \{\pm 1\}$), then we have 
	\[ \text{$s_i + \ell t_i \in \{\pm1\} \pmod{p}$ for every~$i$.} \]
For any given $i$, the representation of~$Q$ on~$V_i$ is nontrivial, so we may choose $z_0$ so that $t_i = -1$. Therefore, we have $s_i -\ell \equiv \pm 1 \pmod{p}$. On the other hand, by letting $z_0 = z$ (and noting that $s_i - \ell \not\equiv s_i + \ell \pmod{p}$) we see that we also have $s_i + \ell \equiv \mp 1 \pmod{p}$. Adding these two equations and dividing by~$2$ yields $s_i = 0$ (for all~$i$). So $v'$ is trivial (which means $\varphi(a) = a$). 

All that remains is to show that $\ell = 1$ (which means that $\varphi$ acts trivially on~$V$). 
Suppose this is not true. (That is, suppose $\ell \neq 1$.)
For convenience, let $Z = \langle z \rangle = \Z(A \ltimes V)$. Note that, since $\varphi(a) = a$, we have 
	\[ a \cdot (\ell z) = \varphi(az) \in \varphi( S \cap aV) = S \cap aV = a \, \conj{Q}{z}, \]
so there is some $g \in Q$, such that $\conj{g}{z} = \ell z$. Since $Z = \langle z \rangle$, this implies that $g$ is an element of the normaliser $\N_Q(Z)$ of $Z$ in $Q$. Also note that $g$ is nontrivial, because $\ell \neq 1$. Then, since $\N_Q(Z)$ is normalized by~$A$ (because $A$ normalizes $Q$ and~$Z$), the irreducibility of the representation of~$A$ on~$Q$ implies that $\N_Q(Z) = Q$. Hence, $Q$ acts on~$Z$ by conjugation, so $Q/C_Q(Z)$ embeds in the cyclic group $\Aut(Z) \cong \ZZ_p^\times$. Since $Q$ is an elementary abelian $2$-group, this implies that $|Q/C_Q(Z)| \le 2$. It is clear that $|Q| \geq 4$ (because $Q \cong (\ZZ_2)^n$ and $n \geq 2$), so we conclude that $C_Q(Z)$ is nontrivial. Using once again the fact that the representation of~$A$ on~$Q$ is irreducible, we conclude that $C_Q(Z) = Q$, which means that $Q$ centralizes~$Z$. However, since 	
	\[ Z 
	= \langle z \rangle
	= \langle v_1 + v_2 + \cdots + v_p \rangle
	,\]
and each $\langle v_i \rangle = V_i$ is a $Q$-invariant subspace, this implies that $Q$ centralizes each~$v_i$, and is therefore trivial on~$V$. On the other hand, we have $z^g= \ell z \neq z$ (since $\ell \neq 1$), and $g \in Q$. This is a contradiction.

\section{Using wreath products to construct witnesses} \label{WreathProdSect}

In this \lcnamecref{WreathProdSect}, we prove \cref{DRR-normal}, which is a generalization of \Cref{G1xG2relprime}.

\begin{notation}
In this \lcnamecref{WreathProdSect}, $N$ always denotes a normal subgroup of a group~$G$, and $\overline{\phantom{x}} \colon G \to G/N$ the natural homomorphism.
\end{notation}

\begin{notation}
For each $c \in G$ and $f \colon \overline{G} \to N$, we let $\varphi_{c,f}$ be the permutation on~$G$ that is defined by
	\[ \text{$\varphi_{c,f}(x) = xc \, f(\overline{x})$ for $x \in G$.} \]
Let $W(G,N)$ be the set of all such permutations of~$G$.
\end{notation}

\begin{remark}
Informally speaking, an element of $W(G,N)$ is defined by choosing an element of~$\overline{G}$ (or, more accurately, by choosing a coset representative) to permute the cosets of~$N$, and then choosing an element of~$N$ to act on each coset. (The elements of~$N$ can be chosen independently on each coset.)

We have $\varphi_{c,f} = \varphi_{c',f'}$ if and only if there is some $n \in N$, such that $c' = cn$ and $f'(\overline{x}) = n^{-1} f(x)$ for all~$\overline{x}$. From this, it follows that $|W(G,N)| = |\overline{G}| \cdot |N|^{|\overline{G}|}$.
\end{remark}

\begin{remark} \label{WreathGroupsDefn}
The usual definition of the \emph{wreath product} of two groups $K$ and~$H$ is essentially:
	\[ K \wr H = W( K \times H, \{1\} \times H) . \]
\end{remark}

\begin{definition}
Recall that the \emph{wreath product} $X \wr Y$ of two (di)graphs $X$ and~$Y$ is the (di)graph whose vertex set is the cartesian product $X \times Y$, and with a (directed) edge from $(x_1,y_1)$ to $(x_2,y_2)$ if and only if either there is a (directed) edge from $x_1$ to~$x_2$ or $x_1 = x_2$ and there is a (directed) edge from $y_1$ to~$y_2$.
\end{definition}

The following two observations are well known (and fairly immediate from the definitions).
The first is a concrete version of the Universal Embedding Theorem, which states that $G$ is isomorphic to a subgroup of $(G/N) \wr N$.

\begin{lemma} \label{WreathGroups}
$W(G,N)$ is a subgroup of the symmetric group on~$G$. It is isomorphic to the wreath product $\overline{G} \wr N$, and contains the regular representation of~$G$.
\end{lemma}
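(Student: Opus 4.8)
The plan is to verify the three assertions in turn — that $W(G,N)$ is a subgroup of $\Sym(G)$, that it contains the regular representation, and that it is isomorphic to $\overline{G}\wr N$ — with essentially all the effort going into the last one. I would begin by recording the composition rule. Since $\overline{f(\overline{x})}=1$ we have $\overline{\varphi_{c,f}(x)}=\overline{x}\,\overline{c}$, and a direct computation (with the convention $(\alpha\circ\beta)(x)=\alpha(\beta(x))$) gives
\[
\varphi_{c,f}\circ\varphi_{c',f'}=\varphi_{c'c,\,f''},\qquad f''(\overline{x})=\conj{c}{f'(\overline{x})}\cdot f(\overline{x}\,\overline{c'}),
\]
where $\conj{c}{f'(\overline{x})}=c^{-1}f'(\overline{x})c\in N$ because $N$ is normal. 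In particular each $\varphi_{c,f}$ is injective, hence a permutation since $G$ is finite, and the display shows that $W(G,N)$ is closed under composition. As $W(G,N)$ is a nonempty (it contains the identity $\varphi_{1,1}$) finite subset of $\Sym(G)$ closed under the group operation, it is a subgroup. For the regular representation I would simply take $f$ to be the constant function~$1$: then $\varphi_{c,1}(x)=xc$, so $\{\varphi_{c,1}:c\in G\}\subseteq W(G,N)$ is precisely the set of right translations of~$G$, that is, the regular representation.

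The main step is the isomorphism $W(G,N)\cong\overline{G}\wr N$. By the definition recalled in \cref{WreathGroupsDefn} we have $\overline{G}\wr N=W(\overline{G}\times N,\{1\}\times N)$, so it suffices to identify the two permutation groups $W(G,N)$ and $W(\overline{G}\times N,\{1\}\times N)$. I would fix a transversal $\{t_{\overline{g}}\}_{\overline{g}\in\overline{G}}$ of~$N$ in~$G$ with $t_{\overline{1}}=1$ and use the resulting bijection $\theta\colon G\to\overline{G}\times N$, $\theta(x)=(\overline{x},\,t_{\overline{x}}^{-1}x)$, whose inverse sends $(\overline{g},n)$ to $t_{\overline{g}}n$. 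This bijection carries the coset partition $\{xN\}$ of~$G$ to the partition $\{\{\overline{g}\}\times N\}$ of $\overline{G}\times N$. The claim then reduces to showing that conjugation by~$\theta$ maps $W(G,N)$ onto $W(\overline{G}\times N,\{1\}\times N)$, and checking this is the heart of the matter: one computes $\theta\,\varphi_{c,f}\,\theta^{-1}$ and must recognise it as some $\varphi_{C,F}$ for the pair $(\overline{G}\times N,\{1\}\times N)$.

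The one genuine subtlety, and the step I expect to be the main obstacle, is the conjugation $\conj{c}{\,\cdot\,}$ appearing in the composition rule: in~$G$ the subgroup~$N$ may be acted on nontrivially by~$G$, whereas in $\overline{G}\times N$ the transversal $\{(\overline{g},1)\}$ centralizes $\{1\}\times N$. The point is that this twisting, together with the transversal cocycle $t_{\overline{x}\,\overline{c}}^{-1}\,t_{\overline{x}}\,t_{\overline{c}}\in N$, is exactly absorbed by the change of variables~$\theta$, so that $W(G,N)$ is isomorphic to the untwisted wreath product irrespective of the extension~$G$. This is precisely the content of the Krasner--Kaloujnine universal embedding theorem, so rather than grind through the cocycle bookkeeping I would either carry out this verification directly or cite that theorem. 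Finally, the order count $|W(G,N)|=|\overline{G}|\cdot|N|^{|\overline{G}|}=|\overline{G}\wr N|$ confirms that the injection obtained is onto, completing the proof.
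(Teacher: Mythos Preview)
Your proposal is correct. Note, however, that the paper does not actually prove this lemma: it is stated as ``well known (and fairly immediate from the definitions)'' and is explicitly identified as a concrete version of the Universal Embedding Theorem, which is exactly the Krasner--Kaloujnine result you invoke at the end. So your outline is consistent with, and more detailed than, what the paper provides; the composition formula you derive and the transversal bijection are the standard ingredients, and your order count matches the one in the paper's remark preceding the lemma.
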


\begin{lemma} \label{WreathCayley}
Suppose $\Cay(\overline{G}, \overline{S_1})$ is a loopless Cayley digraph on~$\overline{G}$, and $\Cay(N, S_2)$ is a Cayley digraph on~$N$. Let $S_1 = \{\, g \in G \mid \overline{g} \in \overline{S_1} \,\}$. Then
	\[ \Cay( G, S_1 \cup S_2) \cong \Cay(\overline{G}, \overline{S_1}) \wr \Cay(N, S_2) , \]
and $W(G,N)$ is contained in the automorphism group of $\Cay( G, S_1 \cup S_2)$.
\end{lemma}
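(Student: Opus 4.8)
The plan is to prove both assertions by directly analyzing how the connection set $S_1 \cup S_2$ interacts with the partition of~$G$ into cosets of~$N$. The key structural observation is that $S_1$, being the full preimage $\{\, g : \overline{g} \in \overline{S_1} \,\}$ of~$\overline{S_1}$, is a union of \emph{nontrivial} cosets of~$N$: looplessness of $\Cay(\overline{G},\overline{S_1})$ means $1 \notin \overline{S_1}$, so $S_1 \cap N = \emptyset$ and in particular $S_1 \cap S_2 = \emptyset$. Consequently, the edge condition $g_2 g_1^{-1} \in S_1 \cup S_2$ splits cleanly according to whether $g_1$ and $g_2$ lie in the same coset of~$N$: if $\overline{g_1} = \overline{g_2}$ then $g_2 g_1^{-1} \in N$, so there is an edge exactly when $g_2 g_1^{-1} \in S_2$; and if $\overline{g_1} \neq \overline{g_2}$ then $g_2 g_1^{-1} \notin N$, hence $g_2 g_1^{-1} \notin S_2$, so there is an edge exactly when $g_2 g_1^{-1} \in S_1$, equivalently $\overline{g_2}\,\overline{g_1}^{-1} \in \overline{S_1}$. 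This dichotomy is precisely the adjacency rule of a wreath product of digraphs, which is what makes the isomorphism work.

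For the isomorphism I would fix a transversal $t \colon \overline{G} \to G$ of~$N$ (a section of the quotient map) and define $\psi \colon G \to \overline{G} \times N$ by $\psi(g) = \bigl(\overline{g},\, g\, t(\overline{g})^{-1}\bigr)$; this is a bijection onto the vertex set of $\Cay(\overline{G},\overline{S_1}) \wr \Cay(N,S_2)$. Writing $g_1 = n_1 t(\overline{g_1})$ and $g_2 = n_2 t(\overline{g_2})$ with $n_1,n_2 \in N$, the between-coset case above matches the wreath-product rule, since there the edge depends only on $\overline{g_1}$, $\overline{g_2}$, and $\overline{S_1}$; and in the same-coset case one computes $g_2 g_1^{-1} = n_2 n_1^{-1}$, so the edge condition becomes $n_2 n_1^{-1} \in S_2$, i.e.\ an edge of $\Cay(N,S_2)$ from $n_1$ to~$n_2$. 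Hence $\psi$ is a digraph isomorphism. The one point requiring care is to place the $N$-part on the \emph{left} (write $g = n\,t(\overline{g})$ rather than $t(\overline{g})\,n$); with the paper's left-multiplication edge convention this is what makes the within-coset restriction equal $\Cay(N,S_2)$ on the nose rather than a conjugated copy of it.

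For the second assertion I would verify directly that each $\varphi_{c,f} \in W(G,N)$ preserves adjacency. Since $f(\overline{x}) \in N$, applying the quotient map gives $\overline{\varphi_{c,f}(g)} = \overline{g}\,\overline{c}$, so $\varphi_{c,f}$ carries the coset partition to itself and in particular preserves the relation $\overline{g_1} = \overline{g_2}$. In the same-coset case the values $f(\overline{g_1}) = f(\overline{g_2})$ coincide and cancel, yielding $\varphi_{c,f}(g_2)\,\varphi_{c,f}(g_1)^{-1} = g_2 g_1^{-1}$, so the adjacency condition is literally unchanged. In the different-coset case the difference need not be preserved, but its image under the quotient map is $\overline{g_2}\,\overline{g_1}^{-1}$, and since membership in~$S_1$ depends only on this image while different cosets preclude membership in~$S_2$, adjacency is again preserved. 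As these cases cover all ordered pairs and each is an ``if and only if'', every $\varphi_{c,f}$ lies in $\Aut\bigl(\Cay(G, S_1 \cup S_2)\bigr)$, giving the stated containment.

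I expect no serious obstacle: once the coset dichotomy is isolated, the lemma is bookkeeping. The only genuine pitfalls are the left/right conventions (for both the coset factorization and the edge direction) and remembering that looplessness is exactly what forces $S_1 \cap N = \emptyset$, so that the two cases are disjoint. One could alternatively deduce the second assertion from the first together with \cref{WreathGroups}, by checking that $\psi$ intertwines the $W(G,N)$-action on~$G$ with the standard action of $\overline{G} \wr N$ on $\overline{G} \times N$; I would nonetheless prefer the direct computation, since it is shorter and avoids re-identifying the two group actions.
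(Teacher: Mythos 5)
Your proof is correct, and it is exactly the ``fairly immediate from the definitions'' verification that the paper has in mind: the paper states this lemma without proof, and your coset dichotomy (looplessness forces $S_1\cap N=\emptyset$, so within-coset edges come from $S_2$ and between-coset edges from $S_1$) together with the explicit isomorphism $g\mapsto(\overline{g},\,g\,t(\overline{g})^{-1})$ and the direct check that each $\varphi_{c,f}$ preserves both cases is the intended argument. Your attention to the left/right conventions is well placed and handled correctly.
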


The following result is a special case of the general principle that the automorphism group of a wreath product of digraphs is usually the wreath product of the automorphism groups. We have stated it only for DRRs, making use of some straightforward observations about the automorphism group of a DRR on more than $2$ vertices, but the much more general statement in \cite{DobsonM} applies to all vertex-transitive digraphs.

\begin{lemma}[cf.\ Dobson-Morris {\cite[Theorem 5.7]{DobsonM}}] \label{thm:wreath-aut}
Assume that $\Cay(\overline{G}, \overline{S_1})$ and $\Cay(N, S_2)$ are loopless DRRs, and let $S_1$ be as in \cref{WreathCayley}. If either $|\overline{G}| \neq 2$ or $|N| \neq 2$, then 
	\[ \Aut \bigl( \Cay( G, S_1 \cup S_2) \bigr) = W(G,N) . \]
\end{lemma}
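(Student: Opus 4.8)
Write $X=\Cay(\overline{G},\overline{S_1})$ and $Y=\Cay(N,S_2)$, so that by \cref{WreathCayley} we have $\Cay(G,S_1\cup S_2)\cong X\wr Y$ and the inclusion $W(G,N)\subseteq\Aut(X\wr Y)$ already holds; the whole content of the lemma is the reverse inclusion. Since $X$ and $Y$ are DRRs, $\Aut(X)=\widehat{\overline{G}}$ and $\Aut(Y)=\widehat{N}$, and $W(G,N)\cong\widehat{\overline{G}}\wr\widehat{N}$ by \cref{WreathGroups}; thus the assertion is exactly the instance $\Aut(X\wr Y)=\Aut(X)\wr\Aut(Y)$ of the general principle for wreath products of digraphs. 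The plan is to (i) show that the partition of the vertex set into \emph{fibers} (the copies $\{x\}\times N$, equivalently the cosets of $N$) is preserved by every automorphism, and then (ii) decompose an arbitrary automorphism along this invariant partition.

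Step (i) is the crux. The key structural fact is that in $X\wr Y$ the adjacencies between two distinct fibers are all-or-nothing, being governed entirely by $X$; consequently two vertices of the same fiber have identical out-neighbourhoods and identical in-neighbourhoods \emph{outside} that fiber, whereas for two vertices in different fibers the symmetric differences of their out- and in-neighbourhoods are spread across several fibers. I would therefore introduce an automorphism-invariant equivalence relation on the vertex set declaring $u$ and $v$ equivalent when the symmetric differences of their out-neighbourhoods and of their in-neighbourhoods are each confined to a single fiber (phrased as the coarsest such relation, to avoid circularity), and verify that its classes are exactly the fibers. This verification is precisely where the DRR hypothesis and the condition ``$|\overline{G}|\neq 2$ or $|N|\neq 2$'' are used: in a DRR on more than two vertices no two distinct vertices can share \emph{both} their in- and their out-neighbourhoods, since the transposition of such a pair would be a nontrivial automorphism, contradicting regularity, and this rigidity of $X$ lets the external adjacency patterns separate distinct fibers. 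The excluded case $|\overline{G}|=|N|=2$ is genuinely bad, as there $X\wr Y\cong K_4$, whose automorphism group $\Sym(4)$ dwarfs $W(G,N)$; isolating exactly where this degeneracy must be ruled out is the main obstacle.

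Granting (i), the decomposition is routine. An automorphism $\alpha$ induces a permutation $\overline{\alpha}$ of the fibers that is an automorphism of the quotient digraph; because $X$ is loopless, this quotient is precisely $X$, so $\overline{\alpha}\in\Aut(X)=\widehat{\overline{G}}$ and is right translation by some $\overline{c}$. Composing $\alpha$ with the corresponding translation of $G$ produces an automorphism that fixes every fiber setwise; its restriction to each fiber preserves the within-fiber arcs and so is an automorphism of that copy of $Y$, hence lies in $\widehat{N}$, and (since between-fiber adjacency is insensitive to positions within fibers) these restrictions may be chosen independently on the different fibers. Reading off the coset-shift $\overline{c}$ together with the per-fiber translations as a function $f\colon\overline{G}\to N$ exhibits $\alpha$ in the form $\varphi_{c,f}$, so $\alpha\in W(G,N)$, which gives the reverse inclusion and completes the proof.

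Alternatively, once the identifications of the first paragraph are in place, the entire argument (i)--(ii) may be replaced by a direct appeal to the general Dobson--Morris theorem on automorphism groups of wreath products of vertex-transitive digraphs, with the non-degeneracy hypothesis verified exactly as in Step (i); I would still expect the bookkeeping around that non-degeneracy condition, and its translation into ``$|\overline{G}|\neq 2$ or $|N|\neq 2$'', to be the delicate point.
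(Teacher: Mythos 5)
The paper does not prove this lemma from scratch: it invokes the general Dobson--Morris theorem \cite[Theorem~5.7]{DobsonM} on automorphism groups of wreath products of vertex-transitive digraphs, and the only work left is to check that the degenerate cases of that theorem are excluded, using the rigidity of DRRs on more than two vertices. Your closing paragraph proposes exactly this route, so to that extent you agree with the paper; the issue is whether your self-contained argument (i)--(ii) actually substitutes for the citation.

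It does not, as written, and the gap is concentrated in Step (i). First, the relation you propose is circular: ``the symmetric differences are each confined to a single fiber'' quantifies over the very partition whose invariance you are trying to establish, and the ``coarsest such relation'' is not obviously well defined, nor obviously an equivalence relation, nor obviously preserved by automorphisms once the self-reference is resolved. Second, and more substantively, the only separating mechanism you supply is the rigidity of $X$: no two vertices of a DRR on more than two vertices share both their in- and out-neighbourhoods. That handles $|\overline{G}|>2$, but the hypothesis also permits $|\overline{G}|=2$ with $|N|>2$, where $X$ is $K_2$ or its complement and its two vertices \emph{are} twins; there the fiber partition is preserved for a different reason, namely that a DRR $Y$ on more than two vertices is weakly connected and has weakly connected complement (otherwise $Y$ would be a nontrivial digraph wreath product and its automorphism group would be too large to act regularly). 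Your sketch never invokes connectivity of $Y$ or of its complement, so it cannot recover the full statement. Even in the case $|\overline{G}|>2$, the claim that neighbourhood symmetric differences of vertices in distinct fibers are ``spread across several fibers'' requires the casework in which the symmetric difference of $N_X^{\pm}(\overline{u})$ and $N_X^{\pm}(\overline{v})$ meets $\{\overline{u},\overline{v}\}$, where the internal $Y$-adjacencies interfere; that bookkeeping is essentially the content of the Dobson--Morris proof, not a routine verification. Step (ii) is fine. The cleanest repair is to do what the paper does: quote \cite[Theorem~5.7]{DobsonM} and verify its non-degeneracy hypotheses separately in the two regimes $|\overline{G}|>2$ and $|\overline{G}|=2<|N|$.
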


In light of \cref{WitnessIffSelfnorm,thm:wreath-aut}, it is of obvious interest to us to determine when the regular representation of~$G$ is self-normalizing in $W(G,N)$. Our next result is the answer to this question.

\begin{theorem} \label{SelfNormalizingInWreath}
Let $N$ be a normal subgroup of~$G$. Then the regular representation of~$G$ is self-normalizing in $W(G,N)$ if and only if
	\begin{enumerate}
	\item \label{SelfNormalizingInWreath-contained}
	$\Z(N) \le \Z(G)$, 
	and
	\item \label{SelfNormalizingInWreath-prime}
	the order of the abelianization of $G/N$ is relatively prime to $|{\Z(N)}|$.
	\end{enumerate}
\end{theorem}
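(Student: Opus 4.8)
The plan is to compute the normalizer $\N_{W(G,N)}(\widehat{G})$ of the regular representation directly and to characterize exactly when it collapses to $\widehat{G}$. Recall that $\widehat{G}$ consists of the right translations $R_c\colon x\mapsto xc$, which are precisely the $\varphi_{c,f}$ with $f$ constant. Since $\widehat{G}$ always lies in its own normalizer, composing a candidate $\varphi_{c,f}$ with suitable elements of $\widehat{G}$ changes neither whether it normalizes $\widehat{G}$ nor whether it belongs to $\widehat{G}$; this lets me assume $c=1$ and $f(\overline{1})=1$. For such an element a short calculation gives $\varphi_{1,f}^{-1}(y)=y\,f(\overline{y})^{-1}$ and hence
\[
\varphi_{1,f}\,R_d\,\varphi_{1,f}^{-1}(y)=y\cdot\bigl(f(\overline{y})^{-1}\,d\,f(\overline{y}\,\overline{d})\bigr).
\]
Thus $\varphi_{1,f}$ normalizes $\widehat{G}$ if and only if, for every $d\in G$, the quantity $f(u)^{-1}\,d\,f(u\,\overline{d})$ is independent of $u\in\overline{G}$; I call this common value $\alpha(d)$, and one checks directly that $d\mapsto\alpha(d)$ is an automorphism of~$G$ (it is conjugation by $\varphi_{1,f}$, transported along $R_d\mapsto d$).

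Next I extract the algebraic structure of the admissible functions~$f$. Taking $d\in N$ (so $\overline{d}=\overline{1}$) in the normalizing condition, together with $f(\overline{1})=1$, forces $f(u)^{-1}n\,f(u)=n$ for all $u\in\overline{G}$ and all $n\in N$; since $f(u)\in N$, this means $f(u)\in\Z(N)$ for every~$u$. Taking instead $u=\overline{1}$ yields $f(\overline{d})=d^{-1}\alpha(d)$, and feeding this into the fact that $\alpha$ is a homomorphism produces the crossed-homomorphism ($1$-cocycle) identity
\[
f(\overline{d_1}\,\overline{d_2})=\conj{d_2}{f(\overline{d_1})}\cdot f(\overline{d_2}),
\]
where $\overline{G}$ acts on $\Z(N)$ by conjugation (well defined because $N\trianglelefteq G$ makes $\Z(N)$ characteristic in~$N$, while $N$ centralizes $\Z(N)$). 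Conversely, a direct substitution shows that for any $1$-cocycle $\zeta\colon\overline{G}\to\Z(N)$ the element $\varphi_{1,\zeta}$ does normalize $\widehat{G}$, since then $f(u)^{-1}\,d\,f(u\,\overline{d})=d\,\zeta(\overline{d})$ is independent of~$u$. As $\varphi_{1,f}$ lies in $\widehat{G}$ exactly when $f$ is constant, i.e.\ when the cocycle is trivial, I obtain the clean reformulation: $\widehat{G}$ is self-normalizing in $W(G,N)$ if and only if the only crossed homomorphism $\overline{G}\to\Z(N)$ is trivial, that is, $Z^1(\overline{G},\Z(N))=0$.

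It then remains to translate the vanishing of $Z^1(\overline{G},\Z(N))$ into conditions \pref{SelfNormalizingInWreath-contained} and \pref{SelfNormalizingInWreath-prime}. The coboundary attached to $a\in\Z(N)$ is the cocycle $\overline{x}\mapsto a^{-1}\conj{x}{a}$, and these coboundaries all vanish precisely when $\overline{G}$ acts trivially on $\Z(N)$, i.e.\ when $\Z(N)\le\Z(G)$; so if \pref{SelfNormalizingInWreath-contained} fails, a non-central element of $\Z(N)$ yields a nontrivial cocycle and $\widehat{G}$ is not self-normalizing. Assuming \pref{SelfNormalizingInWreath-contained}, the action is trivial, so every $1$-cocycle is an ordinary homomorphism $\overline{G}\to\Z(N)$, which factors through the abelianization; the group of these is $\mathrm{Hom}(\overline{G}^{\mathrm{ab}},\Z(N))$, and for finite abelian groups this is trivial if and only if $\gcd\bigl(|\overline{G}^{\mathrm{ab}}|,|\Z(N)|\bigr)=1$, which is \pref{SelfNormalizingInWreath-prime}. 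Combining the two halves shows $Z^1(\overline{G},\Z(N))=0$ exactly when both \pref{SelfNormalizingInWreath-contained} and \pref{SelfNormalizingInWreath-prime} hold.

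The main obstacle I anticipate is the normalizer computation and the recognition of the cocycle structure: pinning down that every admissible~$f$ must take values in $\Z(N)$, and then assembling the cocycle identity from the fact that conjugation by $\varphi_{1,f}$ is an automorphism of~$G$. Once the problem is recast as the vanishing of $Z^1(\overline{G},\Z(N))$, the passage to conditions \pref{SelfNormalizingInWreath-contained} and \pref{SelfNormalizingInWreath-prime} is routine first cohomology of finite groups.
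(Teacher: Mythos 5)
Your proof is correct. The key computations overlap heavily with the paper's, but the organization is genuinely different: you compute the full normalizer $\N_{W(G,N)}(\widehat{G})$ and identify it with $\{\varphi_{c,f} : c \in G,\ f \in Z^1(\overline{G}, \Z(N))\}$, which is exactly the content of the paper's Remark 4.9 (stated there without proof as a ``slight modification''), and then you read off the theorem as the statement $Z^1(\overline{G},\Z(N)) = 0$. The paper instead splits the two directions: for ($\Rightarrow$) it exhibits the two explicit witnesses (conjugation by $n \in \Z(N) \smallsetminus \Z(G)$, which in your language is a nontrivial coboundary, and $\varphi_{f,1}$ for a nontrivial homomorphism $f \colon \overline{G} \to \Z(N)$); for ($\Leftarrow$) it invokes \cref{Norm(regrep)} to reduce to showing $\Aut(G) \cap W(G,N) = \{1\}$, and then runs essentially your ``values in $\Z(N)$, hence a homomorphism, hence trivial'' argument on a group automorphism rather than on a general normalizing permutation. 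Your route buys the stronger normalizer statement for free and makes the coboundary/cocycle dichotomy behind the two conditions transparent; the paper's route avoids the cocycle formalism entirely and is a bit shorter because \cref{Norm(regrep)} does the normalizer bookkeeping. All the individual steps in your argument check out (the inverse formula, the independence-of-$u$ criterion, the extraction of $f(u) \in \Z(N)$ from $d \in N$, the cocycle identity from $\alpha$ being a homomorphism, and the final reduction to $\mathrm{Hom}(\overline{G}^{\mathrm{ab}}, \Z(N))$).
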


\begin{proof}
($\Rightarrow$)
We prove the contrapositive.
	\pref{SelfNormalizingInWreath-contained}~If $\Z(N) \not\le \Z(G)$, then there exists $n \in \Z(N)$ such that $n \notin \Z(G)$. Conjugation by~$n$ is an element of $W(G,N)$ that normalizes the right regular representation of~$G$, but is not in the right regular representation of~$G$. 
	\pref{SelfNormalizingInWreath-prime}~If the order of the abelianization of $\overline{G}/N$ is not relatively prime to $|\Z(N)|$, then there is a nontrivial homomorphism $f \colon \overline{G} \to \Z(N)$. We may assume that hypothesis~\pref{SelfNormalizingInWreath-contained} is satisfied, and then it is straightforward to verify that the corresponding element $\varphi_{f,1}$ of $W(G,N)$ normalizes the right regular representation of~$G$:
	\begin{align*}
	 \varphi_{f,1}(x g) 
	&= xg \, f( \overline{xg})
		&& \text{(definition of $\varphi_{f,1}$)}
	\\&= x \, f( \overline{xg}) \, g
		&& \text{($f(\overline{xg}) \in f(\overline{G}) \subseteq \Z(N) \subseteq \Z(G)$)}
	\\&= x \, f( \overline{x} ) \, f( \overline{g}) \, g
		&& \text{($f$ is a homomorphism)}
	\\&= 	\varphi_{f,1}(x) \cdot f( \overline{g}) \, g
		&& \text{(definition of $\varphi_{f,1}$)}
	. \end{align*}

($\Leftarrow$) By \cref{Norm(regrep)}, it suffices to show that $\Aut(G) \cap W(G,N)$ is trivial.
To this end, let $\varphi \in \Aut(G) \cap W(G,N)$.
Since $\varphi \in W(G,N)$, there exist $c \in G$ and $f \colon \overline G \to N$, such that
	\[ \text{$\varphi(x) = x c \, f(\overline{x})$ for all $x \in G$.} \]

Since $\varphi$ is a group automorphism we know $\varphi(1) = 1 \in N$, so we may assume $c = 1$, after multiplying~$c$ on the right by an element of~$N$. Then we must have $f(\overline{1}) = 1$. Now, for each $n \in N$, we have $\overline{n} = \overline{1}$, so
	\[ \varphi(n) = n \cdot f(\overline{n}) = n \cdot f(\overline{1}) = n \cdot 1 = n .\]

Therefore, for all $g \in G$ and $n \in N$, we have
	\begin{align*}
	gn \cdot f(\overline{g})
	&= gn \cdot f(\overline{gn})
	= \varphi(gn)
	= \varphi(g) \, \varphi(n)
	= g \, f(\overline{g}) \cdot n
	, \end{align*}
so $n \cdot f(\overline{g}) = f(\overline{g}) \cdot n$. 
Since this is true for all $n \in N$, we conclude that $f(\overline{g}) \in \Z(N)$. 
Since $\Z(N) \subseteq \Z(G)$, this implies $f(\overline{g}) \in \Z(G)$ for all $\overline{g}$. Therefore, for all $g,h \in G$, we have
	\begin{align*}
	gh \cdot f(\overline{gh})
	&= \varphi(gh)
	= \varphi(g) \, \varphi(h)
	= g \, f(\overline{g}) \cdot h \, f(\overline{h})
	= g h \cdot f(\overline{g}) \, f(\overline{h})
	. \end{align*}
So $f$ is a group homomorphism. Since $f(\overline{G} )$ is contained in $\Z(N)$, which is abelian, we see from~\pref{SelfNormalizingInWreath-prime} that $f$ must be trivial. Since $c$ is also trivial, we conclude that $\varphi(x) = x$ for all~$x$. Since $\varphi$ is an arbitrary element of $\Aut(G) \cap W(G,N)$, this completes the proof.
\end{proof}

\begin{remark}
A slight modification of the proof of \cref{SelfNormalizingInWreath} shows that if $\widehat G$ is the right regular representation of~$G$, then the normalizer of $\widehat G$ in $W(G,N)$ is 
	\[ \bigl\{\, \varphi_{c,f} \mid c \in G, f \in Z^1\bigl( \overline{G}, \Z(N) \bigr) \,\bigr\} ,\]
where 
	\[ Z^1\bigl( \overline{G}, \Z(N) \bigr)
	= \{\, f \colon \overline{G} \to \Z(N) \mid 
	\text{$f(\overline{gh}) = f(\overline{g})^{\overline{h}} \, f(\overline{h})$ 
	for all $\overline{g}, \overline{h} \in \overline{G}$} \,\} \]
is the set of all ``$1$-cocycles'' or ``crossed homomorphisms'' from $\overline{G}$ to $\Z(N)$ (in the terminology of group cohomology \cite{Wikipedia-GrpCoho}). This fact is presumably known.
\end{remark}

It may also be of interest to note that hypotheses \pref{SelfNormalizingInWreath-contained} and~\pref{SelfNormalizingInWreath-prime} in \cref{SelfNormalizingInWreath} are obviously satisfied when $\Z(N)$ is trivial.

Combining the results of this section, we obtain the following.

\begin{cor}\label{DRR-normal}
Let $N$ be a nontrivial, proper, normal subgroup of~$G$, such that $N$ and $G/N$ each admit a DRR (or, respectively, a GRR). If 
	\begin{enumerate}
	\item \label{DRR-normal-contained}
	$\Z(N) \le \Z(G)$, 
	and
	\item \label{DRR-normal-prime}
	the order of the abelianization of $G/N$ is relatively prime to $|{\Z(N)}|$,
	\end{enumerate}
then $G$ is not DRR-detecting (respectively, not GRR-detecting).

More precisely, if we let $\Gamma_1$ be a DRR (respectively, GRR) on $G/N$ and $\Gamma_2$ be a DRR (respectively, GRR) on~$N$, then $\Gamma_1 \wr \Gamma_2$ witnesses that $G$ is not DRR-detecting (respectively, not GRR-detecting).
\end{cor}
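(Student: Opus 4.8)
The plan is to assemble Corollary~\ref{DRR-normal} directly from the three main results of this section, which together do almost all of the work. The hypotheses have been chosen precisely so that \cref{SelfNormalizingInWreath} applies, so the core of the argument is a short chain: produce DRRs on the two factors, form their wreath product, identify its automorphism group via \cref{thm:wreath-aut}, and then invoke \cref{WitnessIffSelfnorm} together with \cref{SelfNormalizingInWreath} to conclude that this graph witnesses that $G$ is not DRR-detecting.

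In more detail, I would first let $\Gamma_1 = \Cay(\overline{G}, \overline{S_1})$ be a DRR on $\overline{G} = G/N$ and $\Gamma_2 = \Cay(N, S_2)$ be a DRR on~$N$, which exist by hypothesis. Since $N$ is a proper subgroup, $\overline{G}$ is nontrivial; since $N$ is nontrivial, $\Gamma_2$ has more than one vertex. Because $\Gamma_1$ and $\Gamma_2$ are DRRs, they are automatically loopless (a loop at a vertex would force a nontrivial automorphism fixing that vertex, contradicting regularity), so \cref{WreathCayley} applies and gives
	\[ \Cay(G, S_1 \cup S_2) \cong \Gamma_1 \wr \Gamma_2 , \]
with $S_1$ defined as in \cref{WreathCayley}. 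I would then note that the sizes $|\overline{G}|$ and $|N|$ cannot both equal~$2$: if they did, then $G$ would have order~$4$ and hence be abelian, forcing $\Z(N) = N$ and $\Z(G) = G$, and hypothesis~\pref{DRR-normal-prime} would require $|\overline{G}| = 2$ to be coprime to $|\Z(N)| = |N| = 2$, which is false. (Alternatively, one checks directly that $\ZZ_2 \times \ZZ_2$ never satisfies both hypotheses.) So the size restriction in \cref{thm:wreath-aut} is met, and that lemma yields
	\[ \Aut(\Gamma_1 \wr \Gamma_2) = W(G,N) . \]

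With the automorphism group pinned down as $W(G,N)$, the conclusion follows mechanically. By \cref{WitnessIffSelfnorm}, the graph $\Gamma_1 \wr \Gamma_2$ witnesses that $G$ is not DRR-detecting exactly when the regular representation $\widehat{G}$ is a proper self-normalizing subgroup of $\Aut(\Gamma_1 \wr \Gamma_2) = W(G,N)$. That it is self-normalizing is precisely \cref{SelfNormalizingInWreath}, whose two hypotheses~\pref{SelfNormalizingInWreath-contained} and~\pref{SelfNormalizingInWreath-prime} are identical to~\pref{DRR-normal-contained} and~\pref{DRR-normal-prime}. That the containment $\widehat{G} \subseteq W(G,N)$ is proper uses $|W(G,N)| = |\overline{G}| \cdot |N|^{|\overline{G}|}$ from the earlier remark, which strictly exceeds $|G| = |\overline{G}| \cdot |N|$ as soon as $N$ is nontrivial and $\overline{G}$ is nontrivial, both guaranteed by the ``nontrivial, proper'' hypothesis on~$N$. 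The GRR case is identical after replacing ``DRR'' by ``GRR'' throughout and taking $S_1, S_2$ inverse-closed, since \cref{WreathCayley}, \cref{thm:wreath-aut}, and \cref{WitnessIffSelfnorm} all have the graph versions built in.

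The main obstacle, such as it is, lies entirely in verifying the hypotheses of \cref{thm:wreath-aut}: one must rule out the degenerate case $|\overline{G}| = |N| = 2$ and confirm looplessness of the factor DRRs. Everything else is bookkeeping, since \cref{SelfNormalizingInWreath} was stated with exactly the conditions needed here. I would therefore spend care only on the size argument, and present the rest as a direct citation chain.
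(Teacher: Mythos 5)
Your proposal is correct and takes essentially the same route as the paper's own proof: form the wreath product, identify its automorphism group as $W(G,N)$ via \cref{WreathCayley} and \cref{thm:wreath-aut} (after ruling out $|\overline{G}|=|N|=2$, which the paper dismisses as ``clear'' but you rightly justify from hypothesis~\pref{DRR-normal-prime}), and conclude with \cref{SelfNormalizingInWreath} and \cref{WitnessIffSelfnorm}. One small quibble: a loop does not by itself force a nontrivial automorphism (a digraph with a loop at every vertex can still have a regular automorphism group); the correct reason one may assume the factor DRRs are loopless is that deleting the identity from the connection set removes all loops without changing the automorphism group.
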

\begin{proof}
Clearly, either $|\overline{G}| \neq 2$ or $|N| \neq 2$. It then follows by \cref{WreathCayley} and \cref{thm:wreath-aut} that $\Aut \bigl( \Gamma_1 \wr \Gamma_2 ) = W(G,N)$. By \cref{SelfNormalizingInWreath}, the regular representation of~$G$ is self-normalizing in $W(G,N)$, therefore $\Gamma_1 \wr \Gamma_2$ witnesses that $G$ is not DRR-detecting (respectively, not GRR-detecting).
\end{proof}

 Note that \cref{G1xG2relprime} can be obtained from \cref{DRR-normal} by letting $G=G_1\times G_2$ and $N = G_2$.

\section{Using cartesian products to construct witnesses} \label{CartesionSect}

\begin{definition}
Recall that the \emph{cartesian product} $X \cartprod Y$ of two (di)graphs $X$ and~$Y$ is the (di)graph whose vertex set is the cartesian product $X \times Y$, such that there is a (directed) edge from $(x_1,y_1)$ to $(x_2,y_2)$ if and only if either 
$x_1 = x_2$ and there is a (directed) edge from $y_1$ to~$y_2$, or
$y_1 = y_2$, and there is a (directed) edge from $x_1$ to~$x_2$.
\end{definition}

We say that a (di)graph is \emph{prime} (with respect to cartesian product) if it has more than one vertex, and is not isomorphic to the cartesian product of two (di)graphs, each with more than one vertex. 
It is well known that every (di)graph can be written uniquely as a cartesian product of prime factors (up to a permutation of the factors), but we do not need this fact. 

To avoid the need to consider permutations of the factors, the following result includes the hypothesis that the factors are pairwise non-isomorphic. (This is not assumed in~\cite{Walker}, which also considers isomorphisms between two different cartesian products, instead of only automorphisms of a single digraph.) The upshot is that, in this situation, the automorphism group of the cartesian product is the direct product of the automorphism groups.

\begin{theorem}[Walker, cf.\ {\cite[Theorem~10]{Walker}}]\label{strict-factorisation}
Let $\Gamma_1,\ldots,\Gamma_k$ be weakly connected prime digraphs that are pairwise non-isomorphic. If $\varphi$ is an automorphism of $\Gamma_1 \cartprod \cdots \cartprod \Gamma_k$, then for each~$i$, there is an automorphism~$\varphi_i$ of~$\Gamma_i$ such that, for every vertex $(v_1,\ldots,v_k)$ of $\Gamma_1 \cartprod \cdots \cartprod \Gamma_k$, we have
	\[ \varphi(v_1,\ldots,v_k) = \bigl( \varphi_1(v_1), \ldots, \varphi_k(v_k) \bigr) . \]
\end{theorem}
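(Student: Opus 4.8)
The plan is to deduce this statement from Walker's original theorem \cite{Walker}, which treats isomorphisms between two (possibly different) cartesian products of weakly connected prime factors, by specializing to the case in which the source and target products coincide and then using the pairwise non-isomorphic hypothesis to eliminate the permutation of factors. The conceptual content — that an automorphism of a cartesian product must respect the product structure up to a permutation of the factors — is exactly the unique prime factorization theorem, which I would invoke as a black box rather than reprove.

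First I would apply Walker's result to $\varphi$, regarded as an isomorphism from $\Gamma_1 \cartprod \cdots \cartprod \Gamma_k$ to itself. This yields a permutation $\sigma$ of $\{1,\ldots,k\}$ together with, for each~$i$, a digraph isomorphism $\psi_i \colon \Gamma_i \to \Gamma_{\sigma(i)}$, such that $\varphi$ is the composite of the coordinate permutation determined by~$\sigma$ with the diagonal map $(v_1,\ldots,v_k) \mapsto (\psi_1(v_1),\ldots,\psi_k(v_k))$. Here the only source of ``mixing'' between different coordinates is the permutation~$\sigma$.

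The key step is then immediate: the existence of the isomorphism $\psi_i \colon \Gamma_i \to \Gamma_{\sigma(i)}$ forces $\Gamma_i \cong \Gamma_{\sigma(i)}$ for every~$i$. Since the factors are pairwise non-isomorphic by hypothesis, this is possible only if $\sigma(i) = i$ for all~$i$; that is, $\sigma$ is the identity. Consequently each $\psi_i$ is an automorphism of~$\Gamma_i$, the coordinate permutation is trivial, and setting $\varphi_i = \psi_i$ produces exactly the asserted formula $\varphi(v_1,\ldots,v_k) = \bigl(\varphi_1(v_1), \ldots, \varphi_k(v_k)\bigr)$.

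The deep part of the argument lives entirely inside Walker's theorem, so the main thing to be careful about is the bookkeeping in the reduction: I would verify that weak connectivity and primeness of each~$\Gamma_i$ are precisely the hypotheses Walker requires, and that the factor map $\psi_i$ he supplies is genuinely a single isomorphism between $\Gamma_i$ and $\Gamma_{\sigma(i)}$ (so that it witnesses $\Gamma_i \cong \Gamma_{\sigma(i)}$), rather than something that could depend on the other coordinates. Once the permutation~$\sigma$ is shown to be trivial, the precise index convention in Walker's statement becomes irrelevant and the remaining verification is routine. A self-contained proof is possible via the canonical partition of the edges of a cartesian product into factor-classes, but establishing the canonicity of that partition amounts to reproving unique factorization, which is why I would instead cite Walker.
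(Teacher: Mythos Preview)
Your proposal is correct and matches the paper's approach exactly. The paper does not give a proof of this theorem; it cites Walker's result directly and, in the paragraph preceding the statement, explains precisely the reduction you describe: Walker's original theorem handles isomorphisms between two cartesian products without assuming the factors are pairwise non-isomorphic, and the extra hypothesis here is included solely ``to avoid the need to consider permutations of the factors.'' Your write-up simply makes that one-line remark explicit.
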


Prime graphs are quite abundant:

\begin{theorem}[Imrich {\cite[Theorem 1]{Imrich}}]\label{cartesian-GRRs}
If $\Gamma$ is a graph (with more than one vertex), such that neither $\Gamma$ nor its complement~$\overline{\Gamma}$ is prime, then $\Gamma$ is one of the following:
\begin{enumerate}
\item \label{cartesian-GRRs-C4}
the cycle of length $4$ or its complement (two disjoint copies of $K_2$);
\item \label{cartesian-GRRs-cube}
the cube or its complement (the graph $K_2 \times K_4$);
\item \label{cartesian-GRRs-K3xK3}
$K_3 \cartprod K_3$ (which is self-complementary); or
\item \label{cartesian-GRRs-K2xDelta}
$K_2 \cartprod \Delta$, where $\Delta$ is the graph obtained by deleting an edge from $K_4$ (which is self-complementary).
\end{enumerate}
\end{theorem}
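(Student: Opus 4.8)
The plan is to show that demanding that \emph{both} $\Gamma$ and $\overline\Gamma$ be nonprime is so restrictive that only the four listed graphs (and their complements) survive. I would first dispose of the disconnected case. Since a graph and its complement cannot both be disconnected, I may assume that $\Gamma$ is disconnected and $\overline\Gamma$ is connected. The complement of a disconnected graph has diameter at most~$2$, and for connected graphs cartesian distance is additive, so $\mathrm{diam}(C \cartprod D) = \mathrm{diam}(C) + \mathrm{diam}(D)$; hence a connected nonprime graph of diameter $\le 2$ must have both factors complete, i.e.\ $\overline\Gamma = K_c \cartprod K_d$. A direct computation shows that $\overline{K_c \cartprod K_d}$ is the tensor (categorical) product $K_c \times K_d$ (adjacency requiring \emph{both} coordinates to differ), and this product is disconnected precisely when $c=d=2$. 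Thus $\overline\Gamma = K_2 \cartprod K_2$ and $\Gamma = \overline{C_4} = 2K_2$, which (together with its complement, by symmetry) is item~\pref{cartesian-GRRs-C4}.

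For the remaining analysis I assume that $\Gamma$ and $\overline\Gamma$ are both connected, so each of the four cartesian factors is connected with at least two vertices. Using additivity of diameter together with the well-known fact that $\mathrm{diam}(\Gamma) \ge 3$ forces $\mathrm{diam}(\overline\Gamma) \le 3$, I may relabel so that $\mathrm{diam}(\overline\Gamma) \le 3$; then the two factors of $\overline\Gamma$ have diameters $\{1,1\}$ or $\{1,2\}$, splitting the argument into the two subcases $\mathrm{diam}(\overline\Gamma) = 2$ and $\mathrm{diam}(\overline\Gamma) = 3$.

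If $\mathrm{diam}(\overline\Gamma) = 2$, then $\overline\Gamma = K_c \cartprod K_d$ and $\Gamma \cong K_c \times K_d$, so the task becomes to decide when the tensor product $K_c \times K_d$ is itself a nontrivial cartesian product. When one factor is $K_2$, the graph $K_2 \times K_d$ is the bipartite crown graph; since a bipartite cartesian product has bipartite factors and the only connected bipartite graph of diameter~$1$ is $K_2$, any factorization has the form $K_2 \cartprod B$, and a regularity/parity count on $B$ excludes every case except $d = 4$, which yields the cube (item~\pref{cartesian-GRRs-cube}). When $c,d \ge 3$ the graph has diameter~$2$, so nonprimeness again forces complete factors and everything reduces to an isomorphism $K_a \cartprod K_b \cong K_c \times K_d$; comparing orders with clique numbers (which are $\max(a,b)$ for the rook's graph and $\min(c,d)$ for the tensor product), backed up by triangle counts, pins this down to $a=b=c=d=3$ and gives $K_3 \cartprod K_3$ (item~\pref{cartesian-GRRs-K3xK3}).

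The genuinely delicate case, which I expect to be the main obstacle, is $\mathrm{diam}(\overline\Gamma) = \mathrm{diam}(\Gamma) = 3$. Here both graphs take the form (complete)~$\cartprod$~(diameter~$2$), say $\Gamma = K_a \cartprod B$ and $\overline\Gamma = K_c \cartprod D$. The complement formula shows that $\overline{K_a \cartprod B}$ consists of $a$ layers, each a copy of $\overline B$, joined between distinct layers by a complete bipartite graph \emph{minus} a perfect matching, whereas $K_c \cartprod D$ joins its $c$ layers by \emph{exactly} a perfect matching. Reconciling these opposite ``matching versus co-matching'' structures, via a careful count of vertex degrees and of common neighbours, should force $a = c = 2$ and $|B| = |D| = 4$; running through the connected diameter-$2$ graphs on four vertices, the only one that makes the construction close up is $\Delta = K_4 - e$, producing the self-complementary graph $K_2 \cartprod \Delta$ (item~\pref{cartesian-GRRs-K2xDelta}). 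Throughout, uniqueness of cartesian prime factorization for connected graphs (Sabidussi--Vizing) is available to keep the factorizations under control, but the hardest part is precisely the bookkeeping in this last case, where the several diameter-$2$ graphs on four vertices must each be eliminated by hand.
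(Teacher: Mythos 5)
The paper offers no proof of this statement at all: it is imported verbatim from Imrich's 1972 paper \cite{Imrich}, so there is no internal argument to compare yours against. Judged on its own terms, your case division is exhaustive, and the cases you actually complete are sound: the disconnected case correctly reduces to $\Gamma = 2K_2$, $\overline\Gamma = C_4$ via additivity of diameter and the identity $\overline{K_c \cartprod K_d} \cong K_c \times K_d$; and the subcase $\mathrm{diam}(\overline\Gamma)=2$ is handled correctly, the crown-graph regularity count giving the cube and the clique-number comparison ($\max(a,b)$ for $K_a \cartprod K_b$ versus $\min(c,d)$ for $K_c \times K_d$) giving $K_3 \cartprod K_3$.

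There is, however, a genuine gap: the case $\mathrm{diam}(\Gamma)=\mathrm{diam}(\overline\Gamma)=3$, which you yourself identify as the main obstacle, is not proved but only described. You assert that reconciling the ``complete bipartite minus a perfect matching'' structure between the $B$-layers of $\overline{K_a \cartprod B}$ with the ``exactly a perfect matching'' structure between the $D$-layers of $K_c \cartprod D$ ``should force'' $a=c=2$ and $|B|=|D|=4$, but these two layer decompositions sit on the same vertex set with no a priori alignment, so the promised count of degrees and common neighbours is exactly the substantive content of the theorem in this case, and it is absent; consequently item \pref{cartesian-GRRs-K2xDelta}, the graph $K_2 \cartprod \Delta$, is never actually derived, nor are the competing diameter-$2$ graphs on four vertices ($C_4$, $K_{1,3}$, the paw) eliminated. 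Two smaller points should also be made explicit: in this last case the factors $B$ and $D$ need not be prime (your form $K_a \cartprod B$ does absorb the three-complete-factor possibility, but only if you say so), and your relabelling step should record that $\mathrm{diam}(\overline\Gamma)=3$ forces $\mathrm{diam}(\Gamma)\le 3$, so that after the diameter-$2$ subcase the only surviving possibility really is $3$ versus $3$. These are repairable, but as written the proposal proves only part of the theorem.
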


The following is an analogous result for digraphs. (Recall that a digraph is \emph{proper} if it is not a graph.)

\begin{theorem}[Grech-Imrich-Krystek-Wojakowski {\cite[Theorem 1.2]{GrechEtAl}} and 
Morgan-{\allowbreak}Morris-Verret {\cite[Theorem 2.2]{MMV-explosive}}]\label{primality}
If $\Gamma$ is a proper digraph, then at least one of $\Gamma$ or $\overline{\Gamma}$ is prime.
\end{theorem}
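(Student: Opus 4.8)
The plan is to prove a sharpened contrapositive: I will show that if a proper digraph $\Gamma$ is \emph{not} prime, then its complement $\overline{\Gamma}$ \emph{is} prime. Since $\Gamma$ is proper it has more than one vertex, so ``not prime'' means $\Gamma \cong A \cartprod B$ for digraphs with $|A|=m\ge 2$ and $|B|=n\ge 2$. Suppose for a contradiction that $\overline{\Gamma}$ is also not prime, say $\overline{\Gamma}\cong C\cartprod D$ with $|C|=p\ge 2$ and $|D|=q\ge 2$, and write $N=mn=pq$ for the number of vertices. First I would extract numerical constraints on $m,n,p,q$ from one elementary observation, reduce to a short list of configurations, and only then bring in properness to eliminate them.

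The elementary observation is that in a cartesian product $A\cartprod B$ every arc lies inside a single fibre: if two vertices differ in \emph{both} coordinates, there is no arc between them in either direction. Call such a pair \emph{generic}. The $(A,B)$-generic pairs are therefore non-arcs of $\Gamma$, while the $(C,D)$-generic pairs are non-arcs of $\overline{\Gamma}$, i.e.\ digons of $\Gamma$; and since any digon of $A\cartprod B$ needs an arc, it lies inside a fibre, so every $(C,D)$-generic pair shares an $(A,B)$-coordinate (and symmetrically). Counting generic pairs against same-fibre pairs — there are $N(m-1)(n-1)/2$ generic and $N(m+n-2)/2$ same-fibre pairs for the $(A,B)$-grid, and likewise for $(C,D)$ — yields $(p-1)(q-1)\le m+n-2$ and $(m-1)(n-1)\le p+q-2$. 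Adding these and simplifying gives
\[ (m-2)(n-2)+(p-2)(q-2)\le 2 . \]
As all four factors are non-negative, this forces each factorisation to be \emph{thin} (to have a factor of size $2$), apart from the sporadic possibilities $\{m,n\}=\{3,3\}$ ($N=9$) or $\{3,4\}$ ($N=12$) and their $(C,D)$-counterparts.

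It remains to rule out the surviving configurations, and this is exactly where properness must finally be used: it cannot be dropped, since for \emph{graphs} these configurations include the genuine exceptions of \cref{cartesian-GRRs}, such as $K_3\cartprod K_3$ and the thin products $K_2\cartprod K_2$ and $K_2\cartprod\Delta$. The finitely many sporadic cases ($N\in\{9,12\}$, together with the small thin cases realising the graph exceptions) I expect to settle by direct inspection: one checks that the two grid structures are essentially forced to coincide, after which orienting any single arc asymmetrically wrecks the cartesian factorisation of $\Gamma$ or of $\overline{\Gamma}$. The main obstacle is the \emph{infinite} thin family, where $\Gamma\cong A\cartprod B$ with $|A|=2$ and $\overline{\Gamma}$ again has a factor of size $2$; here the counting gives no contradiction and a purely structural argument is needed. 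Writing $\Gamma$ as two copies of a $B$-layer joined across the size-$2$ factor, the complement becomes nearly complete in both directions between the two layers, which is too dense and too symmetric to admit a nontrivial cartesian factorisation once $N$ is large. Quantifying this — through out-degrees and the way the asymmetric arc must sit inside the fibres of any putative factorisation of $\overline{\Gamma}$ — is the crux, and the step I expect to be the hardest.
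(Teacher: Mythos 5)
The paper does not prove this statement at all --- it is imported verbatim from \cite{GrechEtAl} and \cite{MMV-explosive} --- so there is no internal argument to measure yours against, and your proposal has to stand on its own. Its counting reduction is correct and genuinely does work: generic pairs of one grid are digons of the complement, hence must be fibre-pairs of the other grid, and the two resulting inequalities $(m-1)(n-1)\le p+q-2$ and $(p-1)(q-1)\le m+n-2$ do combine to $(m-2)(n-2)+(p-2)(q-2)\le 2$, which (after the parity check at $N=9$) confines everything to the two sporadic orders $N\in\{9,12\}$ and the family in which each factorisation has a factor of order~$2$.

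The problem is that what remains after this reduction is not a residue but the bulk of the theorem, and it is missing. The thin family covers every even $N$, and it is precisely where the genuine graph counterexamples of \cref{cartesian-GRRs} live ($C_4$, the cube, $K_2\cartprod\Delta$), so no counting or density estimate that is blind to the orientation of arcs can possibly close it; properness must enter through an essential structural argument. Your treatment of this case consists of the heuristic that the complement of $K_2'\cartprod B$ is ``too dense and too symmetric,'' together with an explicit admission that quantifying this is the step you have not done --- as written, there is no argument there. (The $N=9$ and $N=12$ cases are likewise only asserted to follow ``by direct inspection.'') A plausible route forward is to use properness to locate an asymmetric arc, observe that in a cartesian product every asymmetric arc lies in a fibre and forces that factor to be a proper digraph, and then play the two fibre structures of $\Gamma$ and $\overline{\Gamma}$ against each other on the $4$-vertex squares they span; but until that analysis is actually carried out, the proposal is a correct reduction, not a proof.
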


\begin{cor}\label{graph-prime}
If a nontrivial group $G$ admits a DRR (respectively, GRR), then it admits a DRR (respectively, GRR) that is prime (and weakly connected). Furthermore, if $G$ is not DRR-detecting (respectively, not GRR-detecting), then there is a witness that is prime (and weakly connected).
\end{cor}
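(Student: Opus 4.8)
The plan is to prove both assertions at once by playing a Cayley (di)graph $\Gamma = \Cay(G,S)$ against its complement $\overline\Gamma$, which is again a Cayley (di)graph on~$G$ (and a Cayley graph whenever $\Gamma$ is, since the complementary connection set $G \smallsetminus (S \cup \{1\})$ is inverse-closed when $S$ is). The crucial fact is that $\Aut(\overline\Gamma) = \Aut(\Gamma)$. Since the right regular representation $\widehat G$ lies in both (by \cref{Norm(regrep)}), it follows at once that $\overline\Gamma$ is a DRR (respectively, GRR) precisely when $\Gamma$ is, and, by \cref{WitnessIffSelfnorm}, that $\overline\Gamma$ witnesses that $G$ is not DRR-detecting (respectively, not GRR-detecting) precisely when $\Gamma$ does. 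Thus in each assertion I have two candidate (di)graphs, $\Gamma$ and $\overline\Gamma$, and it suffices to show that one of them is simultaneously prime and weakly connected.

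First I would produce primality. If $\Gamma$ is a proper digraph then so is $\overline\Gamma$, and \cref{primality} already guarantees that one of $\Gamma, \overline\Gamma$ is prime. If instead $\Gamma$ is a graph, then so is $\overline\Gamma$, and the contrapositive of \cref{cartesian-GRRs} gives the same conclusion unless $\Gamma$ is one of the four exceptional graphs listed there. The point is that none of those four qualifies as a DRR, a GRR, or a witness: $K_2 \cartprod \Delta$ (\fullcref{cartesian-GRRs}{K2xDelta}) is not even vertex-transitive, and for the $4$-cycle (\fullcref{cartesian-GRRs}{C4}), the cube (\fullcref{cartesian-GRRs}{cube}), and $K_3 \cartprod K_3$ (\fullcref{cartesian-GRRs}{K3xK3}) no regular subgroup of the automorphism group is self-normalizing. [For $C_4$ a regular subgroup has index~$2$ and is therefore normal; for the cube a regular subgroup has order~$8$ and so is a proper subgroup of a Sylow $2$-subgroup of order~$16$ of the automorphism group of order~$48$, whence its normalizer is strictly larger; for $K_3 \cartprod K_3$ a regular subgroup of order~$9$ is a Sylow $3$-subgroup of the automorphism group of order~$72$, and counting Sylow subgroups shows its normalizer is strictly larger.] By \cref{WitnessIffSelfnorm} this means none of the four is a witness, and since each has $|\Aut| > |V|$ (or is not vertex-transitive) none is a DRR or GRR either. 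Hence the given $\Gamma$ is never exceptional, so in all cases one of $\Gamma, \overline\Gamma$ is prime; call it $\Gamma'$, still a DRR/GRR (respectively, a witness) for~$G$.

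It remains to make $\Gamma'$ weakly connected, and this is the main obstacle: primality and connectivity might have been contributed by different members of the pair, so I cannot merely complement again. Instead I would argue that a prime, vertex-transitive (di)graph can fail to be weakly connected only trivially. Its weak components are permuted transitively by $\Aut(\Gamma')$, hence are pairwise isomorphic to a single weakly connected (di)graph~$C$ occurring $k \ge 2$ times; if $C$ had more than one vertex then $\Gamma' \cong \overline{K_k} \cartprod C$ would be a nontrivial cartesian product, contradicting primality. So each component is a single vertex, $\Gamma' \cong \overline{K_k}$, and primality forces $k$ to be prime (otherwise $\overline{K_k} \cong \overline{K_a} \cartprod \overline{K_b}$).

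Finally I would rule out $\Gamma' \cong \overline{K_k}$. Here $\Aut(\Gamma') = \Sym(k)$, and for $k \ge 3$ a regular subgroup is generated by a $k$-cycle and has normalizer of order $k(k-1) > k$, so it is not self-normalizing; thus $\overline{K_k}$ is neither a witness nor (as $k! > k$) a DRR or GRR when $k \ge 3$. This excludes the disconnected case entirely for the witness assertion, and for the DRR/GRR assertion it leaves only $k = 2$: in that case $G \cong \ZZ_2$ and $\Gamma' \cong \overline{K_2}$, but then its complement $K_2$ is a prime, weakly connected DRR and GRR of $\ZZ_2$, so $K_2$ serves as the required (di)graph. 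This disposes of the one remaining case and completes both parts.
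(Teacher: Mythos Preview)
Your argument is correct and follows essentially the same route as the paper's: pass freely between $\Gamma$ and $\overline{\Gamma}$, invoke \cref{primality} or \cref{cartesian-GRRs} to get primality, check that none of the four exceptional graphs is a DRR/GRR or a witness (using that a proper $p$-subgroup of a Sylow $p$-subgroup is never self-normalizing, exactly as the paper does), and then observe that a prime vertex-transitive digraph which is not weakly connected must be $\overline{K_k}$. The only cosmetic differences are that the paper dispatches weak connectivity before primality rather than after, and that for $K_3 \cartprod K_3$ the paper notes the Sylow $3$-subgroup is in fact unique (hence normal), whereas you leave open whether there are $1$ or $4$ such subgroups; one tiny omission is that you do not explicitly say why $\overline{K_2}$ cannot be a witness in the $k=2$ case, but this is immediate since $\overline{K_2}$ is itself a DRR.
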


\begin{proof}
First, note that $\Gamma$ and $\overline{\Gamma}$ have the same automorphism group, so $\Gamma$ is a DRR (GRR, respectively) for $G$ if and only if $\overline{\Gamma}$ is. Similarly, $\Gamma$ is a witness that $G$ is not DRR-detecting (GRR-detecting, respectively) if and only if $\overline{\Gamma}$ is. 

Also note that if a prime digraph~$\Gamma$ is not weakly connected, and is either a DRR or a witness that some group is not DRR-detecting, then $\overline{\Gamma} = K_2$ (so $\overline{\Gamma}$ is prime and weakly connected). 
This is because any vertex-transitive digraph is isomorphic to $\Gamma_0 \cartprod \overline{K_n}$, where $\Gamma_0$ is a weakly connected component of the digraph, and $n$~is the number of components. 

Suppose that $\Gamma$ is a GRR for $G$. By \cref{cartesian-GRRs}, at least one of $\Gamma$ or $\overline{\Gamma}$ is prime with respect to cartesian product, unless $\Gamma$ is one of the graphs listed in that \lcnamecref{cartesian-GRRs}, but none of those graphs is a GRR, because the automorphism group does not act regularly on the set of vertices:
\begin{enumerate}
\item the automorphism group of a cycle of length $4$ (or its complement) is the dihedral group of order $8$;
\item the automorphism group of the cube (or its complement) is $\ZZ_2 \times \Sym(4)$, of order $48$;
\item the automorphism group of $K_3 \cartprod K_3$ is $\ZZ_2 \wr \Sym(3)$, of order~$72$; and \item the graph $K_2 \cartprod \Delta$ is not vertex-transitive (it is not even true that all vertices have the same valency).
\end{enumerate}

Now, suppose that $\Gamma$ is a DRR for~$G$. We may assume that $\Gamma$ is a proper digraph. (Otherwise, $\Gamma$ is a GRR, so the preceding paragraph applies.) Then, by \cref{primality}, either $\Gamma$ or $\overline{\Gamma}$ is prime with respect to cartesian product. 

Finally, suppose $\Gamma$ is a witness that $G$ is not DRR-detecting (or not GRR-detecting, respectively), such that neither $\Gamma$ nor~$\overline{\Gamma}$ is prime. 
This implies that $\Gamma$ is one of the graphs listed in \cref{cartesian-GRRs}.
(So $G$ is not GRR-detecting.)

However, it is easy to see that none of the graphs listed in \cref{cartesian-GRRs} is a witness. First, recall that a $p$-subgroup of a group cannot be self-normalizing unless it is a Sylow subgroup. Therefore (by \cref{WitnessIffSelfnorm}), if a graph~$\Gamma$ of prime-power order $p^k$ is a witness that some group is not GRR-detecting, then $p^k$ must be the largest power of~$p$ that divides $\Aut(\Gamma)$. This shows that the graphs in \pref{cartesian-GRRs-C4} and~\pref{cartesian-GRRs-cube} are not witnesses. If $\Gamma$ is as described in~\pref{cartesian-GRRs-K3xK3}, then the only regular subgroup of $\Aut(\Gamma)$ is the unique (Sylow) subgroup of order~$9$, which is normal, and is therefore obviously not self-normalizing. Finally, as noted above, the graphs in~\pref{cartesian-GRRs-K2xDelta} are not vertex-transitive.
\end{proof}

\begin{proof}[\bf Proof of \cref{G1xG2notDRR}]
For simplicity, we consider only DRRs (because the proof is the same for GRRs).
Let $\Gamma_1 = \Cay(G_1,S_1)$ be a DRR for~$G_1$, and let $\Gamma_2 = \Cay(G_2,S_2)$ be a witness that $G_2$ is not DRR-detecting. By \cref{graph-prime}, we may assume that $\Gamma_1$ and~$\Gamma_2$ are prime with respect to cartesian product (and are weakly connected). Since $\Gamma_1$ is a DRR, but $\Gamma_2$ is not, we know that $\Gamma_1 \not\cong \Gamma_2$. Therefore, we see from \cref{strict-factorisation} that $\Aut(\Gamma_1 \cartprod \Gamma_2) = \Aut(\Gamma_1) \times \Aut(\Gamma_2)$. 

Since $\Gamma_2$ is not a DRR, $\Gamma_1 \cartprod \Gamma_2$ is not a DRR. Similarly, since the regular representation of~$G_1$ is all of $\Aut(\Gamma_1)$ and the regular representation of~$G_2$ is self-normalizing in $\Aut(\Gamma_2)$, the regular representation of $G_1 \times G_2$ is self-normalizing in $\Aut(\Gamma_1 \cartprod \Gamma_2)$. So $\Gamma_1 \cartprod \Gamma_2$ is a witness that $G_1 \times G_2$ is not DRR-detecting.
\end{proof}

\section{Nilpotent DRR-detecting groups are \texorpdfstring{$p$}{p}-groups} \label{NilpotentSect}

In this \lcnamecref{NilpotentSect}, we prove \cref{NilpotentGRR}, which states that if a nilpotent group is not a $p$-group, then it is not DRR-detecting. 
In most cases, this follows easily from \cref{G1xG2relprime,G1xG2notDRR}, but there is one special case that requires a different proof:

\begin{lemma}\label{Q8}
If $H$ is a nontrivial group of odd order and $H \not\cong \ZZ_3 \times \ZZ_3$, then $Q_8 \times H$ is not DRR-detecting.
\end{lemma}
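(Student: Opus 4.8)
The plan is to produce a single Cayley digraph $\Gamma=\Cay(G,S)$ on $G=Q_8\times H$ with $\Aut(G,S)=\{1\}$ that is not a DRR; by \cref{WitnessIffSelfnorm} this is exactly what it means for $\Gamma$ to witness that $G$ is not DRR-detecting. A separate argument is needed here because neither product construction is available. Since $|H|$ is odd and $H\not\cong\ZZ_3\times\ZZ_3$, Babai's \cref{noDRR} shows that $H$ admits a DRR but that $Q_8$ does not; moreover $Q_8$ is itself DRR-detecting. Indeed, identifying the six elements of order~$4$ with the six edges of $K_4$ identifies $\Aut(Q_8)\cong\Sym(4)$ with the action of $\Sym(4)$ on those edges, and since no graph on four vertices is asymmetric, \emph{every} connection set of $Q_8$ is fixed by a nontrivial automorphism. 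Thus $Q_8$ can play neither the role of a factor admitting a DRR in \cref{G1xG2relprime} nor that of a non-DRR-detecting factor in \cref{G1xG2notDRR}.

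First I would reduce to the case that $H$ is a $q$-group. If $H$ is not a prime power, I would partition its Sylow subgroups into nontrivial groups $H_1,H_2$ of coprime order with $H_2\not\cong\ZZ_3\times\ZZ_3$ (putting the Sylow $3$-subgroup, if present, into $H_1$); then $Q_8\times H_1$ and $H_2$ are nontrivial of coprime orders and each admits a DRR, since a direct product $Q_8\times K$ with $K$ nontrivial never appears in \cref{noDRR}, so \cref{G1xG2relprime} applies to $G=(Q_8\times H_1)\times H_2$. More generally, whenever $H$ has a direct factorization $H=\ZZ_r\times H'$ with $r$ prime and $H'$ nontrivial, writing $G=\ZZ_r\times(Q_8\times H')$ and invoking \cref{G1xG2notDRR} together with induction on $|H|$ reduces the problem to the case that $H$ is directly indecomposable, and in particular to the case of a $q$-group.

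For the remaining case I would construct $\Gamma$ directly, using a DRR $\Cay(H,T)$ of $H$ to rigidify the $H$-direction: because $\Aut(H,T)=\{1\}$, any $\varphi=(\alpha,\beta)\in\Aut(G)=\Aut(Q_8)\times\Aut(H)$ fixing the relevant part of $S$ will be forced to have $\beta=\mathrm{id}$, reducing the problem to controlling $\alpha\in\Aut(Q_8)$. I would then entangle the $Q_8$- and $H$-coordinates so that $\Gamma$ acquires an automorphism outside $\widehat G$; the natural source is the non-split extension structure of $Q_8$, exploited so that $\Gamma$ admits a second regular subgroup (for instance one isomorphic to $(\ZZ_4\times\ZZ_2)\times H$) which, once $\Aut(G,S)=\{1\}$, cannot normalize $\widehat G$ and therefore makes $\widehat G$ a proper self-normalizing subgroup of $\Aut(\Gamma)$.

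The hard part will be making $\Aut(G,S)=\{1\}$ while keeping $\Gamma$ from being a DRR, and the difficulty is structural rather than computational: every block-respecting construction provably fails. Concretely, writing a normal subgroup of $G$ as $N=N_1\times N_2$ with $N_1\trianglelefteq Q_8$ and $N_2\trianglelefteq H$, the hypothesis $\Z(N)\le\Z(G)$ of \cref{SelfNormalizingInWreath}\pref{SelfNormalizingInWreath-contained} forces $N_1\in\{1,\Z(Q_8),Q_8\}$; and in each such case either $N$ or $G/N$ fails to admit a DRR, or, because $H$ is a $q$-group, the abelianization of $G/N$ and $\Z(N)$ share the prime $2$ or the prime $q$, so hypothesis~\pref{SelfNormalizingInWreath-prime} fails. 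Hence \cref{DRR-normal} never applies, no wreath witness exists, and the analogous coincidence defeats the cartesian product used in \cref{G1xG2notDRR}. Consequently the witness must be a genuinely non-normal Cayley digraph, and the crux of the proof is to exhibit tags in $H$ that simultaneously destroy every nontrivial $\alpha\in\Aut(Q_8)$, using the rigidity of $\Cay(H,T)$, yet preserve the extra, non-affine automorphism supplied by the structure of $Q_8$.
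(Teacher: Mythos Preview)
Your proposal is not a proof but a plan, and the crucial step---the actual construction of the witness $\Gamma$ in the base case---is never carried out. You promise to ``entangle the $Q_8$- and $H$-coordinates'' so that $\Gamma$ admits an extra automorphism (perhaps via a second regular subgroup isomorphic to $(\ZZ_4\times\ZZ_2)\times H$), and you correctly identify that the delicate point is killing every nontrivial $\alpha\in\Aut(Q_8)$ while keeping that extra automorphism; but you never exhibit a connection set $S$ or verify either $\Aut(G,S)=\{1\}$ or that $\Gamma$ is not a DRR. The paper does exactly this, directly and for \emph{all} admissible $H$ at once: take $S=S_1\cup\{i\}\cup jH$ where $\Cay(H,S_1)$ is a DRR, argue that any $\varphi\in\Aut(G,S)$ fixes $S_1$ (the odd-order elements of $S$), hence fixes $H$ pointwise, then distinguishes $i$ from $jH$ and picks out $j$ as the unique order-$4$ element of $jH$; and the extra automorphism is the explicit permutation $\tau$ that is the identity on $\langle i\rangle H$ and right-multiplication by a fixed nontrivial $h\in H$ on the other coset $j\langle i\rangle H$. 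Your intuition about the shape of the argument is right, but the content of the lemma is precisely this construction, and it is missing from your proposal.

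There is also a smaller gap in your reduction: you ``partition the Sylow subgroups'' of $H$ to write $H=H_1\times H_2$, but the lemma does not assume $H$ is nilpotent, only that it has odd order. Your reduction therefore proves a weaker statement than the one asserted. The paper avoids this entirely by giving a single uniform construction that works for arbitrary $H$ of odd order with $H\not\cong(\ZZ_3)^2$, with no reduction step at all.
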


\begin{proof}
From \cref{noDRR}, we see that $H$ admits a DRR (because it has odd order, but is not $\ZZ_3 \times \ZZ_3$), so we may let $\Cay(H, S_1)$ be a DRR. Let $S=S_1 \cup \{i\}\cup jH\subseteq G$. It suffices to show that $\Aut(G,S) = \{1\}$ and that $\Cay(G,S)$ is not a DRR.

Let $\varphi \in \Aut(G,S)$. We can characterise $S_1$ as the set of all elements of $S$ that have odd order. Thus, we must have $\varphi(S_1)=S_1$, so $H = \langle S_1 \rangle$ is fixed setwise by~$\varphi$. Since the identity vertex is also fixed and the induced subgraph on $H$ is a DRR, every element of $H$ must be fixed by $\varphi$. We can use this fact to distinguish $i$ from the elements of $jH$ (all of which differ from each other by elements of $H$), so $i$ is fixed by $\varphi$. Finally, $j$ is the unique element of order $4$ in $jH$, so it too is fixed by $\varphi$. We now know that $\varphi$ is an automorphism of $G$ that fixes every element of a generating set for $G$. So $\varphi$ must be trivial.

All that remains is to show that $\Cay(G,S)$ is not a DRR. Fix a nontrivial element $h \in H$, and define a permutation~$\tau$ of~$G$ by
	\[ \tau(x) = \begin{cases}
		\hfil x & \text{if $x \in \langle H,i\rangle$}; \\
		xh & \text{if $x \in j\langle H,i \rangle$}
		. \end{cases} \]
Note that $\tau$ is a permutation of~$G$, because right multiplication by~$h$ is a permutation of $G$ that fixes $\langle H,i\rangle$ setwise.

We claim that $\tau$ is an automorphism of $\Cay(G,S)$. First, note that a directed edge of the form $g \to s_1 g$ or $g \to ig$ either has both of its endpoints in $\langle H,i\rangle$, or has both of its endpoints in $j \langle H,i\rangle$. Since right multiplication by~$h$ is an automorphism of $\Cay(G,S)$, it is clear that $\tau$ preserves such directed edges. The remaining directed edges are of the form $g \to gjh'$ for some $h'\in H$. Multiplying either $g$ or~$gjh'$ on the right by~$h$ results in another such directed edge. This completes the proof that $\tau$ is an automorphism of $\Cay(G,S)$.
\end{proof}

\begin{proof}[\bf Proof of \cref{NilpotentGRR}]
Let $G$ be a nilpotent group, and assume that $G$ is not a $p$-group. 
(Note that $|G|$ is divisible by at least two distinct primes.)
We will show that $G$ is not DRR-detecting.

\setcounter{case}{0}

\begin{case} \label{NilpotentGRRPf-primes}
$|G|$ is divisible by at least three distinct primes.
\end{case}
Let $p$ be the largest prime divisor of~$|G|$ and let $P$ be a Sylow $p$-subgroup of~$G$. Since $G$ is nilpotent, we may write $G = P \times H$ for some subgroup~$H$ with $\gcd \bigl( |P|, |H| \bigr) = 1$. Since $p$ is the largest of at least three primes dividing $|G|$, neither $P$ nor $H$ is a $2$-group or a $3$-group, so we see from \cref{noDRR} that $P$ and $H$ each admit a DRR. Therefore, \cref{G1xG2relprime} implies that $G = P \times H$ is not DRR-detecting.

\begin{case}
 $|G|$ is divisible by precisely two distinct primes $p$ and~$q$.
\end{case}
Since $G$ is nilpotent, we have $G = P \times Q$, where $P$ is a Sylow $p$-subgroup and $Q$ is a Sylow $q$-subgroup of $G$. If $P$ and~$Q$ each admit a DRR, then \cref{G1xG2relprime} implies that $G = P \times Q$ is not DRR-detecting.

We may thus assume, without loss of generality, that $P$ does not admit a DRR. Using \cref{noDRR} and \cref{Q8} and interchanging $P$ and $Q$ if necessary, we may assume that $P$ is isomorphic to one of $(\ZZ_2)^2$, $(\ZZ_2)^3$, $(\ZZ_2)^4$, or $(\ZZ_3)^2$. Thus, we may write $P = (\ZZ_p)^r$, with $r \geq 2$. 

Since $(\ZZ_p)^{r-1} \times Q$ is not a $p$-group, we may assume, by induction on $|G|$, that it is not DRR-detecting. Also note that $\ZZ_p$ admits a DRR. (Take the directed $p$-cycle $\overrightarrow{C_p}$ if $p \geq 3$; or take $K_2$ if $p = 2$.) Therefore, by applying \cref{G1xG2notDRR} with $G_1 = \ZZ_p$ and $G_2 = (\ZZ_p)^{r-1} \times Q$, we see that the group $G = \ZZ_p \times \bigl( (\ZZ_p)^{r-1} \times Q \bigr)$ is not DRR-detecting. 
\end{proof}

\begin{ack}
This work was supported in part by the Natural Science and Engineering Research Council of Canada (grant RGPIN-2017-04905), and Gabriel Verret is grateful to the N.Z. Marsden Fund for its support (via grant UOA1824).
\end{ack}

\end{document}